\newcommand{\nocH}{{H}}
\newcommand{\cN}{\mathcal{N}}
\newcommand{\cR}{\mathcal{R}}
\newcommand{\cS}{\mathcal{S}}
\newcommand{\hA}{\hat{A}}
\newcommand{\hB}{\hat{B}}
\newcommand{\hC}{\hat{C}}
\newtheorem{theorem}{Theorem}
\newtheorem{proposition}{Proposition}
\newtheorem{lemma}{Lemma}
\newtheorem{corollary}{Corollary}
\newcommand{\RR}{\mathbb{R}}
\newcommand{\CC}{\mathbb{C}}
\newenvironment{proof}
   {\noindent {\bf Proof.}}{\hfill$\Box$}
\newenvironment{block}{\left[ \begin{array}}{\end{array}\right] }
\def\cD{{\cal D } }
\def\diag{\mbox{diag} }
\def\transpose{T}
\def\Galpha{{\hA } }
\def\Gbeta{{\hB\hB^* } }
\def\Ggamma{{\hC^*\hC } }
\newtheorem{remark}{Remark}[section]
\begin{document}
\title{Minimal symmetric Darlington synthesis}

\author{L. Baratchart\thanks{Laurent Baratchart and Martine Olivi are   with INRIA, BP 93,
06902 Sophia-Antipolis Cedex, FRANCE,
        {\tt\small \{Laurent.Baratchart@sophia.inria.fr\},
\{Martine.Olivi@sophia.inria.fr\}}},%
     ~  P. Enqvist\thanks{Per Enqvist is with the Division of Optimization and Systems Theory, KTH, Stockholm, Sweden
        {\tt\small \{penqvist@math.kth.se\}}},%
     ~   A. Gombani\thanks{Andrea Gombani is with ISIB-CNR, Corso Stati Uniti 4, 35127, Padova, Italy,
        {\tt\small \{gombani@isib.cnr.it\}}},
         M.
    Olivi$^{*}$
}

\maketitle

{\bf Abstract.} We consider the symmetric Darlington synthesis of a $p \times p$ rational symmetric Schur function $S$ with the constraint that the extension is of size $2p \times 2p$. Under the assumption that $S$ is strictly contractive in at least one point of the imaginary axis, 
we determine the minimal McMillan degree of the 
extension. In particular, we show that it is generically given by the number of zeros of odd 
multiplicity of $I_p-SS^{*}$. A constructive 
characterization of all such extensions is provided in terms of a symmetric realization of $S$ and 
of the outer spectral factor of $I_p-SS^{*}$. The authors's motivation for the problem stems from Surface Acoustic Wave filters where physical constraints 
on the electro-acoustic scattering matrix naturally raise 
this mathematical issue.

{\bf Keywords.} {\it symmetric Darlington synthesis, inner extension, MacMillan degree, Riccati equation, 
symmetric Potapov factorization.}
\section{Introduction}

The Darlington synthesis problem has a long history which goes back to the time when computers were not available and the synthesis of non-lossless circuits was a hard problem: the brilliance of the Darligton synthesis was that it reduced any synthesis problem to a lossless one.
In mathematical terms,  given a $(p\times p)$ Schur function $S$, say, in the right half-plane, the problem is to imbed  $S$  into a $(m+p)\times(m+p)$-inner function $\cS$ so that:
\begin{equation}
\label{extg}
\cS=\left(
\begin{array} {c c} S_{11} & S_{12}\\
                   S_{21} & S
\end{array}
\right),~~~~~~~~~~ \cS(i\omega)\cS^*(i\omega)=I_{m+p},~~~~\omega\in\RR.
\end{equation}

This problem was first studied by
Darlington in the case of a scalar rational $S$ \cite{Darlington}, was generalized to the matrix case
\cite{Belevitch}, and finally carried over to non-rational $S$
\cite{Arov,Dewilde}. We refer the reader to the nice surveys 
\cite{BM,Dewilde2} for further references and generalizations
({\it e.g.} to the non-stationary case). An imbedding of the form (\ref{extg})
will be called a \emph{Darlington synthesis } or \emph{inner extension},
or even sometimes a \emph{lossless extension} of $S$.

A Darlington synthesis exists provided that $S(i\omega)$ has constant rank
a.e., that the determinant of $I_p-S(i\omega)S^*(i\omega)$ (viewed as an operator
on its range) satisfies the Szeg\"o condition (see e.g. \cite{Garnett}), and that $S$ is
pseudo-continuable across the imaginary axis meaning that there is a
meromorphic function in the left half-plane whose nontangential limits on $i\RR$ agree
with $S(i\omega)$ a.e. \cite{Arov,Arov73,Dewilde,Dewilde2,DH}. If moreover
$S$ has the conjugate-symmetry, then $\cS$ may be chosen with this property.
When $S$ is rational the above conditions are fulfilled so that
a Darlington synthesis always exists. In addition, it can be
carried-out without augmenting the McMillan degree ({\it i.e.}
we may require in (\ref{extg}) that ${\rm deg}\cS={\rm deg}S$) and choosing $m=p$; this follows
easily from Fuhrmann's realization theory \cite{F81} and the
arguments in \cite{Dewilde2} or else from more direct computations 
carried out in \cite{AV,GR88}. In particular $\cS$ can be chosen rational, and also
to have real coefficients if $S$ does.

When $S$ is the scattering matrix of an electric $p$-pole without gyrators \cite{Belevitch},
the reciprocity law entails that $S$ is symmetric and the question arises 
whether the extension $\cS$ can also be made
symmetric; this would result in a Darlington synthesis which is itself free from
gyrators. In \cite{AV} it is shown that a symmetric Darlington synthesis of
a symmetric rational $S$ indeed exists and, although one can no longer
preserve the degree while keeping $m=p$, he can at least ensure that ${\rm deg \cS}\leq
2{\rm deg S}$. 
The existence of a  symmetric Darlington
synthesis for non-rational functions has been studied in \cite{Arov73},
in the slightly different but equivalent setting of $J$-inner extensions.

In \cite{AV} it is also shown that, by increasing the \emph{size} $m$ to 
$p +n$, where $n$ is the degree of $S$, it is possible to construct a 
{\em symmetric} extension of exact degree $n$.
However, such an increase of $m$ is not always appropriate.
In fact, although the original motivations from circuit synthesis that brought
the problem of lossless imbedding to the fore are mostly forgotten today, 
the authors of the present paper were led to raise the above issue in
connection with the modeling of Surface Acoustic Waves filters
\cite{BEGO}. In this context, physical constraints impose $m=p$, so that each block of the
electro-acoustic scattering matrix $\cS$ in (\ref{extg}) has to be of size $p \times p$.

It is thus natural to ask  the following : \emph{given a
  symmetric rational $S$, what is the minimal degree of a symmetric 
lossless extension $\cS$?} This is the problem that we consider.
For \emph{scalar} systems, this minimal degree has been known for a while and can be found, for instance in
\cite{YWP} (see also Section 3 below). The present paper will generalize this to the matrix-valued case.
We restrict our attention to the case where $S$ is strictly contractive in at least 
one point of the imaginary axis. This implies that the extension will  have size $2p$. For the general case, that is, with extensions of lower size, the analysis 
seems to be more difficult and it will possibly be treated in a subsequent paper.

In Section \ref{prelim} we introduce some notations. In Section \ref{scalarcase}, we 
shed light on the problem by discussing the elementary \emph{scalar case}, that is, $p=1$. In
Section \ref{innercomp} we recall some results of Gohberg and Rubinstein \cite{GR88} about a state space 
construction of an inner extension preserving the degree and we characterize all inner extensions in terms of minimal ones. In Section \ref{symun} we present a simple method to construct (possibly unstable) symmetric unitary extensions. 
In Section \ref{minsym} we finally produce a symmetric inner extension of minimal degree. 
In Section \ref{realcase} we discuss the symmetric \emph{and conjugate symmetric}
 unitary extension of a rational 
symmetric  Schur function which is conjugate symmetric ({\it i.e.} that has
real coefficients), and we show on an example that its 
minimal degree is generally \emph{larger} than the one attainable without the conjugate-symmetry 
requirement.

\section{Preliminaries and notations}\label{prelim}

Throughout, if $M$ is a complex matrix,
we let ${\rm Tr}(M)$ stand for
its trace, $M^{T}$ for its transpose, and
$M^{*}$ for its transpose-conjugate.
We denote respectively by 
\[\Pi^+=\{s\in\CC;~{\rm Re\;} s>0\}~~~~{\rm and}~~~~\Pi^-=\{s\in\CC;~{\rm Re\;} s<0\}\] 
the right and left half-planes.

In System Theory, a rational function whose poles lie
in $\Pi^-$ is called \emph{stable},
and a rational function which is finite (resp. vanishing) at infinity is
called \emph{proper} (resp. \emph{strictly proper}).
System Theory is often concerned with
functions having the \emph{conjugate symmetry}:
$W(\bar{s})=\overline{W(s)}$, but we shall not make this
restriction unless otherwise stated. A rational
function has the conjugate-symmetry if, and only if, it has real coefficients.

For $W(s)$ a matrix-valued function on $\CC$, 
we define its para-hermitian conjugate $W^*$ to be:
\begin{equation}
\label{para-hermitian}
W^{*}(s):= W(-\bar{s})^*.
\end{equation}
\emph{Note that $*$ has two different meanings depending on its position with respect to the variable; this slight ambiguity is common in the literature and allows for a simpler notation}.

Note that  $W^*(i\omega)=W(i\omega)^*$ on the imaginary axis, and 
if $W$ is a polynomial then
$W^*$ is also a polynomial whose zeros are reflected 
from those of $W$ across the imaginary axis.

We say that a rational
$p \times m$
matrix-valued function $S$ holomorphic on $\Pi^+$ is a \emph{Schur function} 
if it is contractive:
\begin{equation}
\label{contdef}
S(s)S(s)^*\leq I_p, ~~s\in\Pi^+.
\end{equation}

\if 0
Of necessity, the entries of a Schur function are bounded in $\Pi^+$
and therefore have nontangential limits almost everywhere 
on the imaginary axis that characterize them completely \cite{Garnett}.
This allows for us to speak of $S(i\omega)$ for a.e. $\omega\in\RR$.
\fi

A rational $p\times p$ Schur function $S$ is said to be \emph{lossless}, or \emph{inner},
if 
\begin{equation}
\label{losslessdef}
S(i\omega)S(i\omega)^*=I_p,
\hspace{1cm} \omega \in \RR.
\end{equation}
The scalar rational inner functions are of the
form $q^*/q$ where $q$ is a
polynomial whose roots lie in $\Pi^-$; if ${\rm deg}\,q=n$, 
such a function is called a \emph{Blaschke product of degree} $n$.
A (normalized) Blaschke product of degree 1 is just a M\"obius transform of the type
\begin{equation}
\label{defbxi}
b_{\xi}(s) :=(s-\xi)/(s + \overline{\xi}),~~~~~~\xi \in\Pi^+.
\end{equation}
The natural extension to the matrix case is given by
\begin{equation}
\label{defBxi}
B_{\xi} (s)= \left( \begin{array}{cc}
b_{\xi}(s) & 0  \\
0             & I_{p-1} 
\end{array} \right) ,~~~~\xi\in\Pi^+,
\end{equation}

which is the most elementary example of an inner function  of degree 1. Actually, 
it is a result of Potapov \cite{Pot60,Dym} that
these and unitary matrices together generate all rational inner matrices. More precisely, if
$Q$ is such a matrix and $\xi_1,\ldots,\xi_n$ its zeros 
({\it i.e.}  the zeros of its determinant which is a Blaschke product)
ordered arbitrarily counting multiplicities, there exist complex unitary matrices 
$U_1,\ldots,U_{n+1}$ such that
\begin{equation}
\label{Potapov}
Q=U_1\,B_{\xi_1}\,U_2\,B_{\xi_2}\ldots U_n\,B_{\xi_n}\,U_{n+1}.
\end{equation}
An inner matrix like $U_1\,B_{\xi_1}\,U_2$ is often called an \emph{elementary Blaschke factor}.

Given a proper rational matrix $S$, we shall write
\begin{equation}
\label{notR}
 S = \left( \begin{array}{c|c} A & B \\ \hline C & D 
\end{array}\right)
\end{equation}
whenever $(A,B,C,D)$ is a \emph{realization} of 
$S$, in other words whenever $S(s) = C(sI_n-A)^{-1}B+D$ where $A,B,C,D$ 
are complex
matrices of appropriate sizes. Because $S$ in this case
is the so-called transfer function \cite{AV,BGK,Kalman} of the linear dynamical system:
\begin{equation}
\label{sysdyn}
 \dot{x}=Ax+Bu,~~~~y=Cx+Du,\end{equation}
with state $x$, input $u$, and output $y$, we say sometimes that $A$ is a 
\emph{dynamics matrix} for $S$. The matrices $C$ and $B$ are respectively  called
the \emph{output} and \emph{input} matrices of the realization. 

Every proper rational matrix has infinitely many realizations,
and a realization is called \emph{minimal} if $A$ has minimal size.
This minimal size will be taken as definition of the \emph{McMillan 
degree} of $S$, abbreviated as ${\rm deg} S$.
As is well-known \cite{Kalman,BGK}, the realization (\ref{notR})
is minimal if and only if \emph{Kalman's criterion} 
is satisfied, that is if the
two matrices:
\begin{equation}
\label{Kalman}
\begin{block}{cccc}
B & AB & \dots & A^{n-1}B 
\end{block}, \quad
\begin{block}{cccc}
C^T & A^TC^T & \dots & (A^T)^{n-1}C^T 
\end{block},
\end{equation}
are surjective, where $n$ denotes the size of $A$. 

The surjectivity of the 
first matrix expresses the \emph{reachability} of the system, and 
that of the second matrix  its
\emph{observability}. Any two minimal realizations can be
deduced from each other by a linear change of coordinates:
\[(A,B,C,D)\mapsto(TAT^{-1},TB,CT^{-1},D),~~~~T ~
\mathrm{an~invertible~matrix},\]
so that a dynamics matrix of minimal size for $S$ is well-defined
up to similarity.  In particular the eigenvalues of $A$ depend only on $S$
and they are in fact its poles, the \emph{multiplicity} of a pole being its total multiplicity as an 
eigenvalue by definition. The sizes of the Jordan blocks associated to an eigenvalue
are called the \emph{partial multiplicities} of that eigenvalue.
The partial multiplicities may be computed as follows.
Performing elementary row and column operations on $S$, 
one can put it in {\em local Smith form} at $\xi$ (see {\it e.g.} \cite[sec.7.2.]{GLR}
or \cite{BO2,BGR}):
\begin{equation}
\label{localSmithform}
S(s)=E(s)\diag[(s-\xi)^{\nu_1},(s-\xi)^{\nu_2},\ldots,(s-\xi)^{\nu_k},0,\ldots,0]F(s)
\end{equation}
where $E(s)$ and $F(s)$ are rational matrix functions that are finite and
invertible at $\xi$ while $k$ is the rank of $S$ as a rational matrix and
$\nu_1\leq \nu_2\leq \ldots\leq\nu_k$ are relative integers. These
integers  are uniquely determined by $S$ and sometimes called its {\em partial
  multiplicities} at $\xi$. Note that $\xi$ is a {\em pole} 
if, and only if, there is
at least one negative partial multiplicity at $\xi$. In fact, 
the negative partial multiplicities at $\xi$ are precisely the 
partial multiplicities of $\xi$ as a pole of $S$. 

One says that $\xi$ is a \emph{zero} of $S$ if the local Smith forms exhibits 
at least one positive partial multiplicity at $\xi$, and the positive partial multiplicities at $\xi$
are by definition the partial multiplicities of $\xi$ as a zero.
If $S$ is invertible as a rational matrix, 
it is clear from (\ref{localSmithform}) that the poles of $S^{-1}$
are the zeros of $S$, with corresponding partial multiplicities.
Note also that a zero may well be at the same time a pole, which causes many of the difficulties in 
the analysis of matrix valued functions.
When $S$ is inner, which is our main concern here, this does not happen
because its poles lie in $\Pi^-$ and its
zeros in $\Pi^+$.

A rational matrix has real
coefficients if, and only if, there exists a minimal realization 
which is real, {\it i.e.} such that $A$, $B$, $C$, and $D$ are real matrices.
As is customary in System Theory, we
occasionally refer to a proper rational matrix as being a \emph{transfer function}.
If it happens to have the conjugate-symmetry, we say it is a \emph{real
  transfer function}. 

The system-theoretic interpretation (\ref{sysdyn}) of
(\ref{notR}) makes it easy to compute a realization for a product of
transfer-functions. In fact, if $S_1$ is $m\times k$ and $S_2$ is $k\times 
p$, and if
\begin{equation}
\label{realdiv}
 S_1 = \left( \begin{array}{c|c} A_1 & B_1 \\ \hline C_1 & D_1 
\end{array}\right),~~~~~~~~
 S_2 = \left( \begin{array}{c|c} A_2 & B_2 \\ \hline C_2 & D_2 
\end{array}\right),
\end{equation}
then a short computation shows that the following two realizations hold:
\begin{equation}
\label{Rprod}
 S_2S_1 = 
\left( \begin{array}{cc|c} A_1 &0& B_1\\
 B_2C_1  & A_2 & B_2D_1 \\ \hline  D_2C_1 & C_2 & D_2D_1 
\end{array}\right),~~~~~~
 S_2S_1 = 
\left( \begin{array}{cc|c} A_2 &B_2C_1& B_2D_1\\
0  & A_1 & B_1 \\ \hline  C_2 & D_2C_1 & D_2D_1 
\end{array}\right).
\end{equation}
Likewise, if $k=m$ and $D_1$ is invertible (so that $S_1$ is {\it a fortiori}
invertible as a rational matrix), then
\begin{equation}
\label{realinv}
 S_1^{-1} = \left( \begin{array}{c|c} A_1-B_1D_1^{-1}C_1 & B_1D_1^{-1} \\ \hline -D_1^{-1}C_1 & D_1^{-1} 
\end{array}\right).
\end{equation}
Using (\ref{Kalman}), it is immediate that the minimality of
(\ref{realdiv}) implies that of (\ref{realinv}). In contrast,
the realizations (\ref{Rprod}) need not be minimal even if the
realizations (\ref{realdiv}) are: pole-zero cancellations may occur in the 
product $S_2S_1$ to the effect that the multiplicity of a pole may not be
the sum of its multiplicity as a pole of $S_2$  ({\it i.e.} an eigenvalue of $A_2$)
and as pole of $S_1$ ({\it i.e.} an eigenvalues of $A_1$). One instance where (\ref{Rprod})
is minimal occurs when $S_1$, $S_2$ have full rank and
no zero of $S_1$ is a pole of $S_2$ and no zero of $S_2$ is a pole of $S_1$
(see \cite{CN}). 
This, in 
particular, is satisfied when $S_1$ and $S_2$ are inner, 
implying that the McMillan degree of a 
product of inner functions is 
the sum of the McMillan degrees.
 By (\ref{Potapov}), this in turn implies that the McMillan 
degree of an inner function is also the degree of its determinant viewed as a scalar Blaschke product.

Whenever $S$ is a transfer function, its transpose $S^T$ clearly has the same 
McMillan degree as $S$. 
A square transfer function $S$ is called \emph{symmetric} if $S=S^T$, and
then a realization is called \emph{symmetric} if 
$A=A^\transpose$, $B^\transpose=C$ and $D=D^\transpose$. 
It is not too difficult to see that a transfer function is symmetric if, and
only if, it has a minimal realization which is symmetric \cite{FH95}. The
latter \emph{may be complex} even if $S$ is a real transfer function.

\section{The case of a scalar Schur function.}\label{scalarcase}
For getting an idea of the solution to our
problem, we first consider 
the symmetric inner extension of a scalar rational Schur function 
to a $2\times2$ inner rational function,  
that is we assume momentarily that $p=m=1$. This case has been considered in \cite{YWP}. Put
\[  S = \frac{p_1}{q}, \]
where $p_1$ and $q$ are coprime polynomials such that 
${\rm deg}\{ p_1\}\leq {\rm deg}\{ q\}$, $p_1$ is not identically zero,
$|p_1(i\omega)|\leq|q(i\omega)|$ for $\omega\in\RR$,
and $q$ has roots in the open left half-plane only. 
The McMillan degree of $S$ is just the degree of $q$ in this case. 
As the orthogonal space to a nonzero vector $v=(a,b)^T\in\CC^2$ is spanned 
by $(-\bar{b},\bar{a})^T$, it is easily checked that
every rational inner extension $\cS$ of $S$, when all
its entries are written over a common denominator, say,
$dq$ where $d$ is a stable monic polynomial, is of the form
 \[{\cal S}  = 
\frac{1}{dq}  
\begin{block}{cc} e^{i\theta_1} & 0 \\ 0 & 1 \end{block}
\begin{block}{cc} 
(dp_1)^* & -p_2^* \\ p_2 & dp_1 
\end{block}
\begin{block}{cc} e^{i\theta_2} & 0 \\ 0 & 1 \end{block}
 \]
where  $\theta_1,\theta_2\in \RR$ and $p_2$ is a polynomial solution of degree
at most $\mathrm{deg}\{ dq\}$  to the 
spectral factorization problem:
\begin{equation}
\label{specfact}
dp_1(dp_1)^* + p_2p_2^* = dq (dq)^*,
\end{equation}
whose solvability is ensured by the contractivity of $S$.
Clearly the extension is symmetric if and only if 
$- e^{i\theta_1}p_2^*= e^{i\theta_2}p_2$, which is compatible with
(\ref{specfact}) if, and only if, 
all zeros of the polynomial 
\[ dq (dq)^*-dp_1(dp_1)^* =dd^*(qq^*-p_1p_1^*)
\]
have even multiplicity. Consider the polynomial
\begin{equation}
\label{defmu}
 \mu:= qq^*-p_1p_1^*,
\end{equation}
and single out its roots of even multiplicity by
writing $\mu=(r_1r_1^*)^2r_2r_2^*$, where $r_1$ and $r_2$ are stable coprime polynomials and 
all the roots of $r_2$ are simple. For $dd^*\mu$ to have roots of even multiplicity only,
it is then necessary that $r_2$ divides $d$. Therefore, 
as the McMillan degree of an inner function is the degree of its determinant,
we get
\begin{equation}\label{scalardeg}
\mathrm{deg}\{ \cS\}=\mathrm{deg}\{ dq\}\geq\mathrm{deg}\{ r_2q\}.
\end{equation}
On another hand, a symmetric inner extension of McMillan degree $\mathrm{deg}\{ r_2q\}$
is explicitly given by
\begin{equation}\label{scalarext}
\cS_m=\begin{block}{cc}
-\frac{p_1^*}{q}\frac{r_2^*}{r_2} & 
\frac{r_1r_1^* r_2^*}{q} \\[1ex]
\frac{r_1r_1^* r_2^*}{q} & \frac{p_1}{q}
\end{block}
=
\begin{block}{cc}
-\frac{p_1^*}{q} &
\frac{r_1r_1^* r_2^*}{q} \\[1ex]
\frac{r_1r_1^* r_2}{q} & \frac{p_1}{q}
\end{block}
\begin{block}{cc}
\frac{r_2^*}{r_2} & 0 \\[1ex] 
0 & 1
\end{block}.
\end{equation}

Thus we see already in the scalar case that the minimal attainable degree for a symmetric
inner extension of $S$ is the degree of $S$ augmented by half the number of zeros of
$\mu$ of odd multiplicity.
Formulas (\ref{scalardeg}) and (\ref{scalarext}) should be compared with the corresponding formulas (98) and (95) in \cite{YWP}.

As $1-SS^*=\mu/(qq^*)$, the zeros of
$\mu$ are the zeros of $1-SS^*$ augmented by the common zeros to
$p_1$ and $q^*$ and the common zeros to $p_1^*$ and $q$;
the latter of course are reflected from the former across the imaginary axis,
counting multiplicities.
In particular a degree-preserving symmetric
Darlington synthesis requires special conditions that can be rephrased  as:
\begin{itemize}
\item[(i)] the zeros of $1-SS^*$ have even multiplicities,
\item[(ii)] each common zero to $S$ and $(S^*)^{-1}$, if any,
 is common with even multiplicity.
\end{itemize}

\begin{remark}
Note that \rm{(i)} is automatically fulfilled for those zeros located \emph{on} the
imaginary axis, if any, so the condition really bears on the 
non-purely imaginary zeros. Note also that \rm{(ii)} concerns those zeros of $S$,
if any, whose reflection across the imaginary axis is a pole of $S$; by the
coprimeness of $p_1$ and $q$, such zeros are never purely imaginary.
\end{remark}

Our goal is to generalize the previous result to matrix-valued contractive rational
functions.

\section{Inner extensions.}
\label{innercomp}
We shall restrict our study to the case where the function $S$ to be
imbedded is strictly contractive at infinity: $\|S(\infty)\|<1$. 
If $S$ is strictly contractive at some finite point $i\omega_0$, the
change of variable $s\to 1/(s-i\omega_0)$
will make it contractive at infinity and such 
a transformation preserves rationality and the McMillan degree while mapping
$\Pi^+$ onto itself, hence our results carry over immediately
to this case. But if $S$ is strictly contractive \emph{at no point} of the 
imaginary axis, then our method of proof runs into difficulties and the 
answer to the minimal degree symmetric inner extension issue will remain open.
To recap, we pose the following problem:

\emph{Given a $p\times p$ symmetric rational
Schur function which is strictly contractive at infinity,
what is the minimal McMillan degree of a $2p\times 2p$ 
inner extension $\cal S$ of $S$ which is also symmetric~:}
\begin{equation}
\label{extsymg}
\cS=\left(
\begin{array} {c c} S_{11} & S_{12}\\
                   S_{21} & S
\end{array}
\right),~~~~ S_{11}=S_{11}^T,~~S_{21}=S_{12}^T,
~~\cS(i\omega)\cS^*(i\omega)=I_{2p},~~\omega\in\RR.
\end{equation}

\subsection{Inner extensions of the same McMillan degree.}

Our point of departure will be the solution to the Darlington synthesis problem for a rational function in terms of realizations. Let $S$ be a  Schur $p\times p$ function which is strictly contractive at infinity and let
\begin{equation}
\label{real:S}
 S = \left( \begin{array}{c|c} A & B \\ \hline C & D 
\end{array}\right)
\end{equation}
be a minimal realization of $S$ of degree $n$. 
The strict contractivity at infinity means that
$I_p-D^* D$ and $I_p-DD^*$ are positive definite. Therefore
we may set
\begin{eqnarray}
\Galpha & = & A+BD^*(I_p-DD^*)^{-1}C, \label{def:Galpha}\\
%\Gbeta & = & B(I_p-D^*D)^{-1}B^*, \label{def:Gbeta}\\
%\Ggamma & = & C^*(I_p-DD^*)^{-1}C, \label{def:Ggamma}
\hB & = & B(I_p-D^*D)^{-1/2}, \label{def:Gbeta}\\
\hC & = & (I_p-DD^*)^{-1/2}C, \label{def:Ggamma}
\end{eqnarray}
and subsequently we define:
\begin{equation} \label{def:A}
\nocH = \begin{block}{cc} -\Galpha^* & -\Ggamma \\ \Gbeta&\Galpha \end{block}. 
\end{equation}

\begin{lemma}\label{Adyn}
Assuming $S$  is a Schur function 
strictly contractive at infinity given by (\ref{real:S}),
then the matrix $\nocH$ defined in (\ref{def:A})
is a dynamics matrix of $(I_p-SS^*)^{-1}$.
Furthermore $\nocH$ is Hamiltonian, i.e. 
\[\nocH^{*}  \left[ \begin{array}{cc}
    0 & I_{p} \\ 
    -I_{p} & 0 \\ 
  \end{array} \right]=-  \left[ \begin{array}{cc}
    0 & I_{p} \\ 
    -I_{p} & 0 \\ 
  \end{array} \right]\nocH.
\]
\end{lemma}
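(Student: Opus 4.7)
The plan is to construct an explicit realization of $(I_p-SS^*)^{-1}$ whose dynamics matrix is exactly $\nocH$, and then verify the Hamiltonian identity by direct block computation.

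First I would write a realization of $S^*$ from (\ref{real:S}). Starting from $S(s)=D+C(sI-A)^{-1}B$ and using the definition (\ref{para-hermitian}), a direct computation gives
\[
S^* = \left(\begin{array}{c|c} -A^* & -C^* \\ \hline B^* & D^*\end{array}\right).
\]
Then I would apply the first product formula in (\ref{Rprod}) with $S_1=S^*$ and $S_2=S$ to obtain a realization of $SS^*$, and subtract it from $I_p$ (which only changes the $D$-part) to get
\[
I_p-SS^* = \left(\begin{array}{cc|c} -A^* & 0 & -C^* \\ BB^* & A & BD^* \\ \hline -DB^* & -C & I_p-DD^*\end{array}\right).
\]

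Next, since $\|S(\infty)\|<1$, the feedthrough matrix $I_p-DD^*$ is invertible, so I would apply (\ref{realinv}) to this realization of $I_p-SS^*$. The dynamics matrix of $(I_p-SS^*)^{-1}$ then becomes
\[
\begin{pmatrix} -A^* & 0 \\ BB^* & A\end{pmatrix}
-\begin{pmatrix} -C^* \\ BD^*\end{pmatrix}(I_p-DD^*)^{-1}\begin{pmatrix}-DB^* & -C\end{pmatrix}.
\]
Expanding this $2\times 2$ block matrix and comparing with (\ref{def:Galpha})--(\ref{def:A}), the $(1,1)$, $(1,2)$ and $(2,2)$ blocks match $-\Galpha^*$, $-\Ggamma$ and $\Galpha$ on the nose (using the self-adjointness of $I_p-DD^*$). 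The only nonobvious identification is the $(2,1)$ block, which comes out as $B\bigl(I_p+D^*(I_p-DD^*)^{-1}D\bigr)B^*$. To recognize this as $\Gbeta=B(I_p-D^*D)^{-1}B^*$ I would establish the push-through identity
\[
I_p+D^*(I_p-DD^*)^{-1}D = (I_p-D^*D)^{-1},
\]
which follows on right-multiplying by $I_p-D^*D$ and using $D^*(I_p-DD^*) = (I_p-D^*D)D^*$. This concludes the first assertion of the lemma.

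For the Hamiltonian property, let $J=\bigl(\begin{smallmatrix}0&I_p\\-I_p&0\end{smallmatrix}\bigr)$. Using the block form (\ref{def:A}) together with $(\Gbeta)^*=\Gbeta$ and $(\Ggamma)^*=\Ggamma$, a direct computation gives
\[
J\nocH = \begin{pmatrix} \Gbeta & \Galpha \\ \Galpha^* & \Ggamma\end{pmatrix},
\qquad
\nocH^{*}J = \begin{pmatrix} -\Gbeta & -\Galpha \\ -\Galpha^* & -\Ggamma\end{pmatrix},
\]
so $\nocH^* J = -J\nocH$, as required. The only mildly delicate step in the whole argument is the push-through identity; everything else is a block-matrix bookkeeping exercise using the rules already recalled in Section~\ref{prelim}.
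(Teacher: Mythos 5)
Your proposal is correct and follows essentially the same route as the paper: realize $S^*$, form a realization of $I_p-SS^*$ via (\ref{Rprod}), invert with (\ref{realinv}) to read off $\nocH$ as the dynamics matrix, and verify the Hamiltonian identity by block computation (your realization of $I_p-SS^*$ differs from the paper's only by an inessential sign convention on the input/output blocks). The push-through identity $I_p+D^*(I_p-DD^*)^{-1}D=(I_p-D^*D)^{-1}$ that you spell out is used silently in the paper when it writes the $(2,1)$ block as $B\Delta_r B^*$, so making it explicit is a welcome addition rather than a deviation.
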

\begin{proof}
By definition
\[ S^* = \left( \begin{array}{c|c} -A^* & -C^* \\ \hline B^* & D^* 
\end{array}\right),\]
then, from (\ref{Rprod})
\[ I_p-SS^* = \left( \begin{array}{cc|c} -A^* &0& C^*\\
 BB^*  & A & -BD^* \\ \hline  DB^* & C & I_p-DD^* 
\end{array}\right),\]
and if $S$ is strictly contractive at infinity 
the inverse of $I_p-DD^*$ is well defined. Then from (\ref{realinv}), we have
\[ (I_p-SS^*)^{-1} =
\left( \begin{array}{cc|c}  -A^*-C^*\Delta_l DB^*& -C^*\Delta_l C &
 C^*\Delta_l\\
B\Delta_r B^* & A+BD^*\Delta_lC &  -BD^*\Delta_l\\ \hline 
 -\Delta_l DB^* & -\Delta_lC & \Delta_l
\end{array}\right),\]
where $\Delta_l=(I_p-DD^*)^{-1}$ and  $\Delta_r=(I_p-D^*D)^{-1}$,
whose dynamics matrix is none but $\nocH$.

Finally, it is easy to check from the definitions
(\ref{def:Galpha})-(\ref{def:Ggamma}) that $\nocH$ is a Hamiltonian matrix, 
i.e. that the 
partition of $\nocH$ defined in (\ref{def:A}) satisfies 
$\nocH_{12}^*=\nocH_{12}$, 
$\nocH_{21}^*=\nocH_{21}$, and $\nocH_{22}^*=-\nocH_{11}$.
\end{proof}

\begin{remark}
The Hamiltonian character of $\nocH$ implies that it is
similar to $-\nocH^*$. In particular the eigenvalues of $\nocH$ are
symmetric with respect to the imaginary axis, counting multiplicities.
It must also be stressed that the realization of 
$(I_p-SS^*)^{-1}$ given in the
proof of Lemma \ref{Adyn} may not be minimal. Because the McMillan
degree is invariant upon taking the inverse, the realization in question will
in fact be minimal if, and only if,
the McMillan degree of $SS^*$ is the sum of the McMillan
degrees of $S$ and $S^*$. This will hold in particular
when no zero of $S$ is a pole of $S^*$ \cite{BGK}, 
in other words if no zero of $S$
is reflected from one of its poles.
Hence the characteristic polynomial of
$\nocH$ plays in the matrix-valued case the role of the polynomial $\mu$ 
given by (\ref{defmu}) in the scalar case (compare condition (ii) after
(\ref{defmu})).
\end{remark}

The (not necessarily  symmetric) inner extensions of $S$ that preserve the McMillan degree
are characterized by the following theorem 
borrowed from \cite{GR88}. Actually, theorem 4.1 in \cite{GR88} describes all the
rational unitary (on the real line) extensions of a (non necessarily square) rational matrix function 
which is contractive on the real
line and strictly contractive at infinity.  The next theorem essentially rephrases
this result in our right half plane setting dealing with
square and stable matrix functions.

\begin{theorem} \label{thm:GR}
If $S$ given by (\ref{real:S}) is a Schur function which is
strictly contractive at infinity,
then all $(2p)\times(2p)$ inner extensions 
\begin{equation}
\label{extinmd}
\cS=\left(
\begin{array} {c c} S_{11} & S_{12}\\
                   S_{21} & S
\end{array}
\right)
\end{equation}
of the same McMillan degree as $S$ are given by
\begin{equation}\label{Wext} 
 {\cal S} = 
\begin{block}{cc} U_2 & 0 \\ 0 & I_p \end{block}
{\cal S}_P
\begin{block}{cc} U_1 & 0 \\ 0 & I_p \end{block}
\end{equation}
where $U_1$ and $U_2$ are arbitrary unitary matrices and where
${\cal S}_P$ is given by
\begin{equation}\label{Ireal}
 {\cal S}_P  = 
\left( \begin{array}{c|cc} A & B_1 & B \\ \hline 
C_1 & D_{11} & D_{12} \\
C & D_{21} & D 
\end{array}\right),
\end{equation}
with
\begin{eqnarray}
\label{d21} 
  D_{21}=(I_p-DD^*)^{1/2}, \quad  %\\
\label{d12} 
  D_{12}=(I_p-D^*D)^{1/2}, \quad  %\\
\label{d11} 
  D_{11}=-D^*, 
\end{eqnarray}
\begin{equation}\label{c}
 C_1 = - (I_p-D^*D)^{-1/2}(B^*P^{-1}+D^*C),
\end{equation}
\begin{equation}\label{b} 
B_1 = - (PC^*+BD^*) (I_p-DD^*)^{-1/2},
\end{equation}
and $P$ is a Hermitian solution to the algebraic Riccati equation:
\begin{equation}\label{RE} 
\cR(P) = P\Ggamma P + \Galpha P + P\Galpha^* +\Gbeta =0. 
\end{equation}
The map $P\to \cS_P$ is a one-to-one correspondence between the Hermitian
solutions to (\ref{RE}) and the inner extensions of degree $n$ of $S$ whose
value at infinity is $\cD$ defined in (\ref{Ireal}).
\end{theorem}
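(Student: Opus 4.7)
The plan is to combine the state-space bounded real lemma with realization algebra. For the ``if'' direction, I would verify that for every Hermitian solution $P$ of (\ref{RE}) the data (\ref{Ireal})--(\ref{b}) yield an inner matrix of McMillan degree exactly $n$ whose $(2,2)$-block is $S$. By the standard state-space characterization of lossless systems (bounded real lemma), a realization $(A,\widetilde B,\widetilde C,\widetilde D)$ with $\widetilde B=[B_1\ B]$ and $\widetilde C=[C_1^\transpose\ C^\transpose]^\transpose$ represents an inner function iff there exists a Hermitian $P$ such that $A P+P A^{*}+\widetilde B\widetilde B^{*}=0$, $\widetilde C P+\widetilde D\widetilde B^{*}=0$, and $\widetilde D\widetilde D^{*}=I_{2p}$. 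The three constant-matrix identities come out directly from (\ref{d21})--(\ref{d11}) together with the commutation $D^{*}(I_p-DD^{*})^{1/2}=(I_p-D^{*}D)^{1/2}D^{*}$. Substituting (\ref{b}) reduces $C P+D_{21}B_1^{*}+DB^{*}=0$ to a tautology, and (\ref{c}) similarly handles $C_1 P+D_{11}B_1^{*}+D_{12}B^{*}=0$. Finally, expanding $B_1B_1^{*}$ and using (\ref{def:Galpha})--(\ref{def:Ggamma}) together with $D^{*}(I_p-DD^{*})^{-1}D=(I_p-D^{*}D)^{-1}-I_p$ transforms $A P+P A^{*}+B_1B_1^{*}+BB^{*}=0$ precisely into $\cR(P)=0$. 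Minimality of the augmented realization is automatic since reachability from $B$ implies reachability from $[B_1\ B]$, and dually for observability; hence $\deg\cS_P=n$, and its $(2,2)$-block is $S$ by construction.

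The converse direction is the main obstacle. Given an inner extension $\cS$ of degree $n$ with $\cS(\infty)=\cD$, I would argue that, up to a state-space similarity, $\cS$ admits the block realization (\ref{Ireal}) with the \emph{same} $(A,B,C,D)$ as the given realization of $S$: indeed the $(2,2)$-block of any minimal realization of $\cS$ is a realization of $S$ of the same degree, hence minimal, so two such realizations are similar, and this similarity can be absorbed to match the data of (\ref{real:S}). The bounded real lemma then supplies a Hermitian $P$ satisfying the three identities of the previous paragraph; the two equations involving $B_1$ and $C_1$ are linear and have the unique solutions (\ref{b}) and (\ref{c}), while the remaining identity becomes exactly the Riccati equation (\ref{RE}). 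Bijectivity of $P\mapsto\cS_P$ follows because $P$ is recovered from $\cS_P$ as the (unique) Hermitian solution to the Lyapunov part of the lossless equations attached to the normalized realization.

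For the general form (\ref{Wext}), I would observe that $\mathrm{diag}(U_2,I_p)\,\cS_P\,\mathrm{diag}(U_1,I_p)$ is an inner extension of $S$ of the same McMillan degree $n$ for any unitary matrices $U_1,U_2$, and that these left/right actions fix the $(2,2)$-block while acting transitively on the set of constant unitary $2p\times 2p$ matrices whose bottom-right block is $D$. Consequently every degree-preserving inner extension can be brought to the normalization $\cS(\infty)=\cD$ by an appropriate choice of $U_1,U_2$, and then identified with some $\cS_P$ by the preceding paragraph, completing the proof.
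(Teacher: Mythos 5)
Your proposal supplies a proof where the paper gives none: Theorem~\ref{thm:GR} is quoted from \cite{GR88} (their Theorem~4.1) and the paper only rephrases it, so there is no in-paper argument to match. Your route --- the state-space lossless lemma $\widetilde D\widetilde D^{*}=I_{2p}$, $\widetilde CP+\widetilde D\widetilde B^{*}=0$, $AP+PA^{*}+\widetilde B\widetilde B^{*}=0$, with the Lyapunov equation collapsing to the Riccati equation (\ref{RE}) after substituting (\ref{b}) --- is sound, and it is in fact the same mechanism the paper deploys explicitly in the proof of Corollary~\ref{innerext}: there the authors verify innerness by writing a realization of $I_{2p}-\cS\cS^{*}$ via (\ref{Rprod}), applying the change of basis $\left[\begin{smallmatrix}I&0\\-P&I\end{smallmatrix}\right]$, and invoking observability to extract $\widetilde CP+\cD\widetilde B^{*}=0$; your ``bounded real lemma'' is exactly that computation packaged as a black box, and your normalization of the value at infinity by polar decomposition of $E_{21}$ and $E_{12}$ (using that $I_p-DD^{*}$ and $I_p-D^{*}D$ are positive definite) correctly accounts for the unitary factors $U_1,U_2$ in (\ref{Wext}). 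Two points should be shored up. First, formula (\ref{c}) presupposes that every Hermitian solution $P$ of (\ref{RE}) is invertible; the paper disposes of this by citing \cite[thm.~3.4]{GR88} in Remark~\ref{Pinv}, but you should either cite it or note the one-line argument: if $Px=0$ then $x^{*}\cR(P)x=x^{*}\Gbeta x=0$ forces $B^{*}x=0$, whence $\cR(P)x=P\Galpha^{*}x=PA^{*}x=0$, so $\ker P$ is an $A^{*}$-invariant subspace of $\ker B^{*}$ and must be trivial by controllability of $(A,B)$. Second, in the converse direction the lossless lemma gives you the equations only for the \emph{controllability Gramian} of the normalized realization, i.e.\ the unique solution of (\ref{lyapeq}); you should say explicitly that this is the $P$ you take (uniqueness of the Lyapunov solution, since $A$ is stable, is also what makes $P\mapsto\cS_P$ injective, because $B_1$ is pinned down by the normalized minimal realization). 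With those additions the argument is complete.
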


\begin{remark}\label{Pinv}
Note that \cite[thm. 3.4.]{GR88} guarantees,
under the assumptions of Theorem \ref{thm:GR}, that
all Hermitian solutions of (\ref{RE}) are invertible and positive definite
since $S(s)$ is stable.
\end{remark}

\subsection{Relation to spectral factors.}
\label{relspec}
We say that a stable $p\times p$ matrix-valued function $S_L$ (resp. $S_R$) is a left (resp. right)
spectral factor of $I_p-SS^{*}$ (resp. $I_p-S^{*}S$) if $S_LS_L^*+SS^{*}=I_p$ (resp.
$S_R^*S_R+S^*S=I_p$); such a factor is called minimal if it is rational and if
the block rational matrix $(S_L~S)$ (resp. $(S_R^T~S^T)^T$), whose McMillan degree is at least the degree
of $S$, actually has the \emph{same} McMillan degree as
$S$. This is equivalent to require that $(S_L~S)$ (resp. $(S_R^T~S^T)^T$) has a minimal realization whose
output (resp. input) and dynamics matrices are those of a minimal realization of $S$.
In particular Theorem \ref{thm:GR} implies that, for any inner extension of $S$ having the same 
McMillan degree, $S_{21}$ (resp. $S_{12}$) is a minimal left (resp. right) spectral
factor of $I_p-SS^{*}$ (resp. $I_p-S^{*}S$).

The corollary below is essentially a rephrasing of the theorem in terms of minimal spectral factors,
compare \cite{A73,F95,W}.
Observe that substituting (\ref{d21})-(\ref{b}) in (\ref{Ireal}) 
yields \[  \left[ \begin{array}{c}
    C_1 \\ 
    C \\ 
  \end{array} \right]=-\left[ \begin{array}{cc} 
 D_{11} & D_{12} \\
D_{21} & D 
\end{array}\right]  \left[ \begin{array}{c}
    B_1^* \\ 
    B^* \\ 
  \end{array} \right]P^{-1}\]
 while a similar substitution in (\ref{RE}) yields
\begin{equation}\label{lyapeq}
AP+PA^{*}+B_1B_1^{*}+BB^{*}=0.
\end{equation}
Note that (\ref{lyapeq}) has a unique solution since $A$ has no purely imaginary eigenvalue,
and that this solution is necessarily Hermitian positive definite by the controllability of $[A~B]$.
\begin{corollary}\label{innerext}
Let $S=  \left[ \begin{array}{c|c}
  A   &  B \\ 
\hline\rule{0cm}{.42cm}
   C  & D \\ 
  \end{array} \right]$ be a minimal realization of a Schur function strictly contractive
at infinity, and define $D_{11}$, $D_{21}$, and $D_{12}$ as in (\ref{d12}).
To each  
minimal left spectral factor $S_{21}$ of $I_p-SS^{*}$ with value
$D_{21}$ at infinity,  there is a unique inner extension of $S$ of the same McMillan degree, whose lower left block is $S_{21}$ and ,
with value  at infinity:
\[\label{scriptD}
\cD=\left[ \begin{array}{cc} 
 D_{11} & D_{12} \\
D_{21} & D 
\end{array}\right]
\]
If we put 
\begin{equation}
\label{redresse}
[S_{21}~ S]=  \left[ \begin{array}{c|cc}
  A   & B_1 & B \\ 
\hline\rule{0cm}{.42cm}
   C  & D_{21} & D \\ 
  \end{array} \right],
\end{equation}
then this extension is none but 
$\cS_{P}$ given by (\ref{Ireal}) where $P$ is the unique solution to (\ref{lyapeq}).
Alternatively, one also has
\begin{equation}
\label{eqjnsp}
\cS_{P}=\left[ \begin{array}{c|cc}
  A   & B_1 & B \\ 
\hline\rule{0cm}{.42cm}
-(D_{11}B_1^{*}+D_{12}B^{*})P^{-1} & D_{11} & D_{12}\\
   C  & D_{21} & D \\ 
  \end{array} \right]
\end{equation}
\end{corollary}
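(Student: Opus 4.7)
The plan is to invert the parametrization of Theorem \ref{thm:GR}: rather than producing $\cS_P$ from a solution $P$ of (\ref{RE}), I start from the prescribed minimal left spectral factor $S_{21}$ and recover $P$. By definition, minimality of $S_{21}$ says that $[S_{21}~S]$ admits a minimal realization sharing the state and output matrices $A,C$ of the given realization of $S$, so the joint realization (\ref{redresse}) is legitimate and uniquely determines $B_1$. I then define $P$ as the unique solution of the Lyapunov equation (\ref{lyapeq}), which is Hermitian positive definite because $A$ is Hurwitz and $(A,[B_1~B])$ is reachable.

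The crux of the argument is that $[S_{21}~S]$ is co-inner since $S_{21}S_{21}^{*}+SS^{*}=I_p$. In state-space form, this co-innerness translates, in addition to (\ref{lyapeq}), into the relations $CP+D_{21}B_1^{*}+DB^{*}=0$ and $D_{21}D_{21}^{*}+DD^{*}=I_p$ (these are the bounded-real conditions in their co-inner form; one may also derive them directly by computing $[S_{21}~S][S_{21}~S]^{*}$ via (\ref{Rprod}) and (\ref{realinv}) and imposing that the result equal $I_p$). The second relation holds by the definition of $D_{21}$, and solving the first for $B_1$ recovers exactly formula (\ref{b}). Substituting (\ref{b}) back into (\ref{lyapeq}) and using the standard identities $D^{*}(I_p-DD^{*})^{-1}=(I_p-D^{*}D)^{-1}D^{*}$ and $D^{*}(I_p-DD^{*})^{-1}D+I_p=(I_p-D^{*}D)^{-1}$ turns (\ref{lyapeq}) into the Riccati equation (\ref{RE}). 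By Theorem \ref{thm:GR} and Remark \ref{Pinv}, this $P$ then produces a unique inner extension $\cS_P$ of the same McMillan degree with value $\cD$ at infinity, whose lower-left block is by construction $S_{21}$; uniqueness follows because any other such extension would yield the same $P$ via the same procedure.

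The alternative formula (\ref{eqjnsp}) is obtained by rewriting $C_1$ in (\ref{c}) as $-(D_{11}B_1^{*}+D_{12}B^{*})P^{-1}$. Using $D_{11}=-D^{*}$, $D_{12}=(I_p-D^{*}D)^{1/2}$, formula (\ref{b}) for $B_1$, and the identity $D^{*}(I_p-DD^{*})^{-1/2}=(I_p-D^{*}D)^{-1/2}D^{*}$, the right-hand side collapses to $-(I_p-D^{*}D)^{-1/2}(B^{*}P^{-1}+D^{*}C)$, which is exactly (\ref{c}). The main obstacle is the co-inner characterization bridging the Lyapunov and Riccati equations: once $P$ is recognized simultaneously as the reachability Gramian of the co-inner factor $[S_{21}~S]$ and as a Hermitian solution of (\ref{RE}), everything else reduces to routine matrix algebra with the $(I_p-DD^{*})^{\pm 1/2}$ identities.
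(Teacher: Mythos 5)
Your proof is correct and follows essentially the same route as the paper: both hinge on extracting the coupling relation $CP+D_{21}B_1^{*}+DB^{*}=0$ from the identity $S_{21}S_{21}^{*}+SS^{*}=I_p$ together with the Lyapunov equation (\ref{lyapeq}) and the observability of $(C,A)$, after which $B_1$ is identified with formula (\ref{b}) and uniqueness follows from the unique solvability of (\ref{lyapeq}). The only (harmless) variation is that the paper certifies innerness of the full $2p\times 2p$ extension by a second direct state-space computation of $I_{2p}-\cS\cS^{*}$, whereas you substitute (\ref{b}) back into (\ref{lyapeq}) to recover the Riccati equation (\ref{RE}) and then delegate innerness to Theorem \ref{thm:GR}.
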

\begin{proof}
Suppose we have an inner extension $\cS$ of $S$, with the stated properties, given by (\ref{extinmd}).
Then, from (\ref{lyapeq})-(\ref{redresse}),
the block $S_{21}$ uniquely defines
$P$ thus also $\cS$ by Theorem \ref{thm:GR}.
Conversely, let $S_{21}$ be a minimal left spectral factor of $I_p-SS^*$ with value $D_{21}$ at infinity. 
Then we have  a realization of the form 
(\ref{redresse}) for $[S_{21}\ S]$ and we may define $P$ through
(\ref{lyapeq}). Using (\ref{Rprod}), we obtain for $I_p-[S_{12}~S][S_{12}~S]^*$ the following realization
\[ I_p-[S_{21}~S][S_{21}~S]^* = \left( \begin{array}{cc|c} -A^* &0& C^*\\
 B_1B_1^*+BB^*  & A &-B_1D_{21}^* -BD^* \\ \hline  D_{21}B_1^*+DB^* & C & I_p-D_{21}D_{21}^*-DD^* 
\end{array}\right).\]
Performing the change of basis defined by $\left[ \begin{array}{c|c}
  I_p  &  0 \\ 
\hline\rule{0cm}{.42cm}
   -P  & I_p \\ 
  \end{array} \right]$ using (\ref{d12}) and (\ref{lyapeq}), we find another realization to be
\[ I_p-[S_{21}~S][S_{21}~S]^* = \left( \begin{array}{cc|c} -A^* &0& C^*\\
 0  & A &-PC^*-B_1D_{21}^* -BD^* \\ \hline  CP+D_{21}B_1^*+DB^* & C & 0 
\end{array}\right).\]
But the rational function under consideration is identically zero by definition of $S_{21}$, hence 
by the observability of $[C~A]$ we get in particular
$-PC^*-B_1D_{21}^* -BD^*=0$ which yields
\begin{equation}
\label{calcC}
C=-[D_{21}~D][B_1~B]^*P^{-1}.
\end{equation}
 Now, put $C_1=-[D_{11}~D_{12}][B_1~B]^*P^{-1}$ and let
$\cS$ be defined as in (\ref{extinmd}).
Starting from the realization of $I_{2p}-\cS\cS^*$ provided by (\ref{Rprod}) and performing the 
change of basis defined by $\left[ \begin{array}{c|c}
  I_p   &  0 \\ 
\hline\rule{0cm}{.42cm}
   -P  & I_p \\ 
  \end{array} \right]$, a computation entirely similar to the previous one shows that this is the zero 
transfer function, that is, $\cS$ is inner. Moreover, (\ref{calcC}) shows it is an extension of $S$
whose lower left block is $S_{21}$, and clearly it has the same McMillan degree 
and value $\cD$ at infinity.  Finally, it is straightforward to check that this extension 
satisfies (\ref{eqjnsp}).
\end{proof}

 An inner extension $\cS_P$ of $S$ with the same McMillan degree and value
$\cD$ at infinity is thus
completely determined by the  choice of a minimal left spectral factor of
$I_p-SS^*$.  
Of course a dual result holds true 
on the right, namely the extension is also uniquely determined by a right minimal spectral factor 
$S_{12}$ of $I_p-S^*S$. In what follows, we only deal with inner extensions
having value $\cD$ at infinity, which the normalization induced by (\ref{Wext})
on letting $U_1=U_2=I_p$ there.

Let $P$ be a solution of the Riccati equation (\ref{RE}). Then the  
matrix  $\nocH$  defined in (\ref{def:A})  satisfies the similarity relation
\begin{equation}
\label{simcalA}
\left[
\begin{array} {cc} I_p & 0\\
                            -P & I_p
\end{array}\right]\nocH
\left[
\begin{array} {cc} I_p & 0\\
                            P & I_p
\end{array}\right]
=\left[
\begin{array} {cc} -(\Galpha+P\Ggamma)^* & -\Ggamma \\
0 & \Galpha+P\Ggamma \end{array}\right].
\end{equation}
Thus
\begin{equation}
\label{chiAsplit}
\chi_{\nocH}(s)=\chi_{Z}(s) \chi_{-Z^*}(s),
\end{equation}
where $\chi_M$ denotes the characteristic polynomials of $M$ and where we have set 
\begin{equation}
\label{Z}
Z=\Galpha+P\Ggamma.
\end{equation}
Moreover,  since $S$ is strictly contractive at infinity, $S_{21}$ is invertible and, in view of (\ref{realinv}), $S_{21}^{-1}$ has the dynamics matrix
\begin{equation}
A-B_1D_{21}^{-1}C = A+BD^*(I_p-DD^*)^{-1}C+PC^*(I_p-DD^*)^{-1}C 
= \Galpha + P \Ggamma=Z. 
\label{dynZ}
\end{equation}
Likewise, $S_{12}^{-1}$ has the dynamics matrix
\begin{equation}
\label{dynZ*}
A-BD_{12}^{-1}C_1=-P(\Galpha+P\Ggamma)^*P^{-1}=-PZ^*P^{-1}.
\end{equation}
This way the  extension process is seen to divide out the eigenvalues of $\nocH$ between
the inverses of the left and right spectral factors of $I_p-SS^*$ and $I_p-S^*S$ respectively.

It is a classical fact that there exists a natural partial ordering on the set of Hermitian solutions to
(\ref{RE}), namely $P_1\leq P_2$ if and only if the difference $P_2-P_1$ is
positive semidefinite. 
It is well-known (see \cite[sect.2.5.]{LR1} or \cite{LR2})  that there exists a maximal
solution $\hat P$ and a minimal solution $\check 
P$ of (\ref{RE}): $\hat P$ is the unique solution for which
$\sigma(\Galpha+\hat P\Ggamma)\subset \overline\Pi^+$, while $\check P$ is the unique solution for which
$\sigma(\Galpha+\check P\Ggamma)\subset \overline\Pi^-$, where $\sigma(M)$
denotes the spectrum of $M$.  The left spectral factor $\check S_{21}$ associated with
$\check P$ is called the {\em outer spectral factor}. Its inverse is analytic
in $\Pi^+$. 

\begin{proposition}\label{spectralfactors}
Let $S$ be  a  Schur function which is strictly contractive at
infinity. Let $P$ and $\widetilde P$ be two distinct solutions of (\ref{RE})
and 
\[{\cS}_P= \begin{block}{cc} S_{11} & S_{12} \\
S_{21} & S \end{block}, ~~~ {\cS}_{\widetilde P}= \begin{block}{cc}
\widetilde S_{11} & \widetilde S_{12} \\
\widetilde S_{21} & S \end{block},   \]
the  inner extensions of $S$  associated to them {\it via} 
Theorem \ref{thm:GR}.  \\
Then, the matrix $Q=S_{21}^{-1}\widetilde S_{21}$ is
well-defined and unitary on the imaginary axis. 
Its McMillan degree  coincides
with the rank of $\widetilde P-P$ and $Q$ is  inner if and only if 
$\widetilde P\geq P$.
\end{proposition}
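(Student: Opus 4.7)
The plan is to check well-definedness and unitarity of $Q$ on $i\RR$, then compute a state-space realization of $Q$ whose minimal reduction has dimension $\mathrm{rank}(\widetilde P-P)$, and finally extract from the Riccati equations a Lyapunov identity that governs when $Q$ is inner. Well-definedness is immediate: $D_{21}=(I_p-DD^*)^{1/2}$ is invertible because $S$ is strictly contractive at infinity, so $S_{21}$ is invertible as a rational matrix. Unitarity on $i\RR$ follows by reading the $(2,2)$-block of $\cS_P\cS_P^*=I_{2p}$ and of $\cS_{\widetilde P}\cS_{\widetilde P}^*=I_{2p}$ there: both yield $S_{21}S_{21}^*=\widetilde S_{21}\widetilde S_{21}^*=I_p-SS^*$, hence $QQ^*=I_p$.

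For the realization, Corollary~\ref{innerext} gives $S_{21}=(A,B_1,C,D_{21})$ and $\widetilde S_{21}=(A,\widetilde B_1,C,D_{21})$ sharing $(A,C,D_{21})$ with $\widetilde B_1-B_1=-(\widetilde P-P)C^*D_{21}^{-1}$, while by (\ref{dynZ}) the dynamics matrix of $S_{21}^{-1}$ is $Z=\Galpha+P\Ggamma$. Substituting into the product formula (\ref{Rprod}) produces a $2n$-dimensional realization of $Q$ whose dynamics is block-triangular with diagonal blocks $A$ and $Z$; an elementary block similarity (namely $\left[\begin{array}{cc} I_n & 0\\-I_n & I_n\end{array}\right]$) block-diagonalizes the dynamics as $A\oplus Z$ and sends the $A$-block's output matrix to zero, leaving the $n$-dimensional realization
\[
Q=\left(\begin{array}{c|c} Z & (\widetilde P-P)C^*D_{21}^{-1}\\ \hline -D_{21}^{-1}C & I_p\end{array}\right).
\]
Subtracting $\cR(P)=0$ from $\cR(\widetilde P)=0$ and regrouping one way yields the Sylvester identity $Z\Delta+\Delta\widetilde Z^*=0$, with $\Delta:=\widetilde P-P$ and $\widetilde Z:=\Galpha+\widetilde P\Ggamma$; iterating as $Z^k\Delta=\Delta(-\widetilde Z^*)^k$ identifies the reachable subspace of the above realization with $\Delta\cdot\mathrm{span}\{(\widetilde Z^*)^kC^*\}=\mathrm{range}(\Delta)$, where the last equality uses observability of $(C,\widetilde Z)$ (inherited from $(C,A)$ by output injection). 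The restriction inherits observability from $(C,Z)$, so $\deg Q=\mathrm{rank}(\widetilde P-P)$.

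Regrouping the subtracted Riccati equations the other way produces the Lyapunov identity $Z\Delta+\Delta Z^*+\Delta\Ggamma\Delta=0$. Writing $\Delta=W\Delta_0W^*$ with $W$ an isometry onto $\mathrm{range}(\Delta)$ and $\Delta_0$ Hermitian invertible, and setting $Z_0=W^*ZW$, $\Ggamma_0=W^*\Ggamma W\geq 0$, this restricts to $Z_0\Delta_0+\Delta_0Z_0^*=-\Delta_0\Ggamma_0\Delta_0$; the poles of $Q$ are exactly $\sigma(Z_0)$, and as $Q$ is rational and unitary on $i\RR$ none of them lies on $i\RR$. If $\widetilde P\geq P$ then $\Delta_0>0$; multiplying by $\Delta_0^{-1}$ on both sides gives $\Delta_0^{-1}Z_0+Z_0^*\Delta_0^{-1}=-\Ggamma_0\leq 0$, and the classical Lyapunov inequality with $\Delta_0^{-1}>0$ forces $\sigma(Z_0)\subset\Pi^-$, so $Q$ is inner. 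Conversely, if $Q$ is inner then $Z_0$ is stable, so the Lyapunov equation $YZ_0+Z_0^*Y+\Ggamma_0=0$ has the unique solution $Y=\int_0^\infty e^{sZ_0^*}\Ggamma_0 e^{sZ_0}\,ds\geq 0$; by uniqueness $Y=\Delta_0^{-1}$, so $\Delta_0>0$ and $\widetilde P\geq P$. The main bookkeeping hurdle is the reduction of the product realization to the $n$-dimensional one and the identification of its reachable subspace via the Sylvester identity; once these are in place the Lyapunov dichotomy cleanly pins down the inner case.
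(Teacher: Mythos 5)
Your proposal is correct and follows essentially the same route as the paper's proof: the reduction to an $n$-dimensional realization of $Q$ with dynamics $Z=\Galpha+P\Ggamma$ and input $(\widetilde P-P)C^*D_{21}^{-1}$, the identity $Z\Gamma+\Gamma Z^*+\Gamma\Ggamma\Gamma=0$ obtained by subtracting the two Riccati equations, restriction to the range of $\Gamma=\widetilde P-P$, and a Lyapunov argument for the inner characterization. The only differences are in execution: you establish minimality by computing the reachable subspace from the Sylvester identity $Z\Gamma=-\Gamma\widetilde Z^*$ together with observability of $(C,\widetilde Z)$, where the paper diagonalizes $\Gamma$ and runs a Hautus test, and you spell out both directions of the Lyapunov inertia argument (correctly noting that the absence of purely imaginary poles of $Q$ is needed to upgrade $\overline{\Pi}^-$ to $\Pi^-$) where the paper delegates this step to a cited inertia theorem.
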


\begin{proof}
Assuming that $S$ is strictly contractive at infinity, 
$S_{21}$ is invertible and  we may define $Q=S_{21}^{-1}\widetilde S_{21}$.
We have that
\begin{equation}
\label{factgen}
QQ^*=S_{21}^{-1}\widetilde S_{21}\widetilde S_{21}^*S_{21}^{-*}
=S_{21}^{-1}(I_p-SS^{*})S_{21}^{-*}
=S_{21}^{-1}(S_{21}S_{21}^{*}) S_{21}^{-*}=I_p,
\end{equation}
so that $Q$ is unitary. A realization of $Q$ can be computed from  the
realizations of $S_{21}$ and $\widetilde S_{21}$ of 
Theorem~\ref{thm:GR}, using (\ref{Rprod}) and (\ref{realinv}):
\[
 Q = \left(
\begin{array} {c|c} A-B_1D_{21}^{-1}C & B_1D_{21}^{-1}\\ \hline
                   -D_{21}^{-1}C & D_{21}^{-1}
\end{array}
\right)
\left(
\begin{array} {c|c} A & \widetilde B_1\\ \hline
                     C & D_{21}
\end{array}
\right)
=
\left(
\begin{array} {cc|c} A-B_1D_{21}^{-1}C & B_1D_{21}^{-1}C &  
                            B_1\\
                     0 & A & \widetilde B_1 \\ \hline
                   -D_{21}^{-1}C & D_{21}^{-1} C &
                   I_p
\end{array}
\right).
\]
Applying a change of variables using 
$ T = \begin{block}{cc} I_n & - I_n \\ 0 & I_n \end{block}$
we get 
\begin{equation}
\label{realQnm}
Q= 
\left(
\begin{array} {cc|c} A-B_1D_{21}^{-1}C & 0 &  
                            B_1-\widetilde B_1\\
                     0 & A & \widetilde B_1 \\ \hline
                   -D_{21}^{-1}C & 0 & I_p
\end{array}
\right)= 
\left(
\begin{array} {c|c} A-B_1D_{21}^{-1}C &  
                            B_1-\widetilde B_1 \\ \hline
                   -D_{21}^{-1}C & I_p
\end{array}
\right).
\end{equation}
From (\ref{b})  we draw
$ B_1-\widetilde B_1 =
(\widetilde P-P)C^*D_{21}^{-1}
$
and 
\begin{equation}\label{Qrealization}
 Q=
\left(
\begin{array} {c|c} Z & (\widetilde P-P)C^* D_{21}^{-1}\\ \hline
                 -  D_{21}^{-1} C & I_p
\end{array}
\right)
\end{equation}
where 
$Z$ is given by (\ref{Z}).

Set $\Gamma  = \widetilde P-P$.
Since $\Gamma$ is Hermitian, the singular value decomposition 
can be written 
\begin{equation}
\label{V}
\Gamma = V
\begin{block}{cc} 0 & 0 \\ 0 & \Gamma_0 \end{block} V^*, 
\end{equation}
where $V$ is unitary and $\Gamma_0$ is real and diagonal.
Let $d\times d$ be the size of $\Gamma_0$, and partition $V$ as $V=[ V_1 \; \, V_2]$
where $V_1$ is of size $n\times(n-d)$ and $V_2$ of size $n\times d$.
Note that the columns of $V_1$ span the kernel of $\Gamma$.
In another connection, it holds that 
\begin{eqnarray*}
\cR(\widetilde P)-\cR(P)&=&\Galpha \Gamma+\Gamma
\Galpha^*+\widetilde P\Ggamma \widetilde P-P\Ggamma P\\
&=&Z\Gamma -P\Ggamma\Gamma+\Gamma Z^*-\Gamma\Ggamma P+\Gamma\Ggamma
\widetilde P+P\Ggamma\Gamma\\
&=& Z\Gamma+\Gamma Z^* +\Gamma \Ggamma \Gamma,
\end{eqnarray*}
which is zero since both $P$ and $\widetilde P$ are solutions to the Riccati
equation (\ref{RE}). Thus $\Gamma$ is a solution to the Riccati equation
\begin{equation}\label{REgamma} 
Z\Gamma+\Gamma Z^*  +\Gamma \Ggamma \Gamma= 0.
\end{equation}
Partitioning  $V^*ZV$ into
\begin{equation}
\label{partiV}
V^*Z V = \begin{block}{cc} Z_{11} & Z_{12} \\ Z_{21} & Z_{22} \end{block}, 
\end{equation}
we can rewrite (\ref{REgamma}) as 
\[ 
\begin{block}{cc} Z_{11} & Z_{12} \\ Z_{21} & Z_{22} \end{block}
\begin{block}{cc} 0  & 0 \\ 0 & \Gamma_0  \end{block} 
+
\begin{block}{cc} 0  & 0 \\ 0 & \Gamma_0 \end{block} 
\begin{block}{cc} Z_{11} & Z_{12} \\ Z_{21} & Z_{22} \end{block}^*
+
\begin{block}{cc} 0  & 0 \\ 0 & \Gamma_0  \end{block} 
V^* \Ggamma V
\begin{block}{cc} 0  & 0 \\ 0 & \Gamma_0  \end{block}= 0,  
\]
that is  
\[ 
\begin{block}{cc} 0   & Z_{12}\Gamma_0\\ 
\Gamma_0 Z_{12}^* & Z_{22} \Gamma_0 + \Gamma_0 Z_{22}^* + \Gamma_0
V_2^*\Ggamma V_2 \Gamma_0 
\end{block} 
= 0.\]
Therefore 
\begin{equation}
\label{Z21null}
Z_{12} = 0
\end{equation}
 and 
\begin{equation}
\label{LyapLambda}
Z_{22} \Gamma_0 + \Gamma_0  Z_{22}^* + \Gamma_0 V_2^*\Ggamma V_2 \Gamma_0=0. 
\end{equation}
Using $V$ as change of coordinates in the state space, we obtain a new realization for  $Q$: 
\begin{equation}
\label{realQbt}
Q=
\left(
\begin{array} {c|c} V^*ZV & 
D_{21}^{-1} V^* \Gamma C^* 
\\ \hline                  
 CV D_{21}^{-1} & I_p
\end{array}\right)
=
\left(
\begin{array} {c|c}\begin{array}{cc}Z_{11} & 0\\Z_{21} & Z_{22} \end{array}
& 
\begin{array}{cc} 0 \\ \Gamma_0 V_2^* C^*D_{21}^{-1}  \end{array} 
\\ \hline                  
\begin{array}{cc} D_{21}^{-1} CV_1 & D_{21}^{-1} CV_2 \end{array} & I_p
\end{array}
\right)
\end{equation}
where we used (\ref{Z21null}). This readily reduces to
\begin{equation}\label{Qmin}
 Q= 
\left(
\begin{array} {c|c} Z_{22} & \Gamma_0 V_2^* C^* D_{21}^{-1}
\\ \hline                  
D_{21}^{-1}C V_2 & I_{p}
\end{array}
\right),
\end{equation}
and by (\ref{LyapLambda}) we also have (recall $\Gamma_0$ is invertible)
that $\Gamma_0^{-1}$ solves the Lyapunov equation
\begin{equation}
\label{LyapZ22}
Z_{22}^*\Gamma_0^{-1} + \Gamma_0^{-1}  Z_{22}+V_2^*\Ggamma V_2=0.
\end{equation}
From (\ref{Qmin}), it is clear that the degree of $Q$ is at most the size of $\Gamma_0$
 which is the rank of $\Gamma=\tilde P-P$. \emph{We claim} that the realization (\ref{Qmin}) is minimal.
 Indeed, it is observable because we started from the observable realization (\ref{realQnm})
 and we just restricted ourselves in step (\ref{realQbt})-(\ref{Qmin}) 
 to some invariant subspace of the dynamics matrix
 in the state space. To check reachability, we use Hautus's test that no nonzero left eigenvector
 $x^T$ of $Z_{22}$ associated, say, to some eigenvalue $\lambda$,
 can lie in the left kernel of $\Gamma_0 V_2^* C^* D_{21}^{-1}$; for then
 (\ref{LyapLambda}) and (\ref{def:Ggamma}) together imply that $x^T\Gamma_0$, which is nonzero 
 as $\Gamma_0$ has full rank, is a left eigenvector
 of $Z_{22}^*$ associated to $-\lambda$. Thus $\Gamma_0^*\bar{x}$ would be an eigenvector of $Z_{22}$
 and by construction it lies in the kernel of $D_{21}^{-1}C V_2$, contradicting the
 observability of (\ref{Qmin}). \emph{This proves the claim}, to the effect that
 the degree of $Q$ is in fact equal to the 
 rank of $\Gamma=\tilde P-P$. Now, from classical properties of solutions to Lyapunov equations
 \cite[th.6.5.2]{BGR},
 the number of poles of $Q$ in $\Pi^+$ is equal to
 the number of negative eigenvalues of $\Gamma_0^{-1}$ solving (\ref{LyapZ22}).
\end{proof}

\begin{remark}
\label{remSF}
A similar result holds true for the right inner factors: the matrix
$R=\widetilde S_{12} S_{12}^{-1}$ is unitary on the imaginary axis and inner
if and only if $\widetilde P\leq P$.
\end{remark} 
The outer spectral factor play an important role in what follows, due to
the fact that \emph{any} (not necessarily minimal) left  spectral factor, say, $\sigma$ of $I_p-SS^*$ 
can be factored as $\sigma=\check S_{21} Q$ where $Q$ is inner. Indeed, the strict contractivity of $S$ 
at infinity entails that $\sigma$ is invertible as a rational matrix, and then computation 
(\ref{factgen}) with $S_{21}$, $\widetilde S_{21}$ replaced by $\check S_{21}$, $\sigma$ shows that
the rational matrix $Q={\check S_{21}}^{-1}\sigma$ is unitary on the imaginary axis. 
In particular it cannot have a pole there, and since it is analytic in $\Pi^+$ as 
$\sigma$ is stable and 
$\check S_{21}$ outer, we conclude that $Q$ is stable thus inner, as desired.

\subsection{Inner extensions of higher degree}
We shall be interested in inner rational extensions where
we allow for an increase in the McMillan degree, and we will 
base our analysis on the next proposition. For the proof, we need a notion of coprimeness:
two inner-functions $L_1$, $L_2$ are \emph{right coprime} if one cannot write
$L_1=G_1 J$ and $L_2=G_2 J$ with $G_1$, $G_2$, $J$ some inner functions and $J$ non-constant.
It is well known \cite{F81,BO} that this is equivalent to require 
the existence of two stable rational matrix functions 
$X_1$ and $X_2$ such that the following Bezout equation holds: 
\begin{equation}\label{bezL}
X_1(s)L_1(s) +  X_2(s)L_2(s) = I_p. 
\end{equation}
Left coprimeness is defined in a symmetric way.

\begin{proposition}
\label{AllLosslessExtensions}
All rational inner extensions of a Schur function $S$, contractive at
$\infty$, 
 can be written on the form
\begin{equation}
\label{factextgen}
  \begin{block}{cc} L & 0 \\
 0 & I_p \end{block}
\cS_P
 \begin{block}{cc} R & 0 \\
 0 & I_p \end{block}
\end{equation}
where $L$, $R$ and $\cS_P$ 
are inner, 
and $\cS_P$ is an extension of $S$ at the same McMillan degree, obtained from
a solution $P$ of the Riccati equation (\ref{RE}).
\end{proposition}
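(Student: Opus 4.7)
The plan is to build the factorization explicitly, by combining the characterization of degree-preserving inner extensions in Theorem \ref{thm:GR} with the spectral-factor decomposition from the end of Section \ref{relspec}. Given any rational inner extension $\cS=\begin{block}{cc} S_{11} & S_{12} \\ S_{21} & S \end{block}$ of $S$, the lower-left block $S_{21}$ is a left spectral factor of $I_p-SS^*$, and the cited factorization yields $S_{21}=\bar S_{21}R$ with $R$ inner, where $\bar S_{21}=\bar S_{21}(P)$ is the minimal left spectral factor associated with a Hermitian Riccati solution $P$ to be chosen adaptively to $\cS$ (see below). Taking $\cS_P$ to be the degree-preserving extension supplied by Corollary \ref{innerext}, with lower-left block $\bar S_{21}$ and upper-right block $\bar S_{12}=\bar S_{12}(P)$, I would then define the second factor by $L:=S_{12}\,\bar S_{12}^{-1}$.

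The factorization $\cS=\begin{block}{cc} L & 0 \\ 0 & I_p \end{block}\cS_P\begin{block}{cc} R & 0 \\ 0 & I_p \end{block}$ is verified block by block. The $(2,2)$ entry is $S$ on both sides. The $(2,1)$ and $(1,2)$ entries hold by the definitions of $R$ and $L$ respectively. For the $(1,1)$ entry, the off-diagonal identities $S_{11}S_{21}^*+S_{12}S^*=0$ and $\bar S_{11}\bar S_{21}^*+\bar S_{12}S^*=0$ (coming from $\cS\cS^*=I_{2p}$ and $\cS_P\cS_P^*=I_{2p}$) give $S_{11}=-S_{12}S^*S_{21}^{-*}$ and $\bar S_{11}=-\bar S_{12}S^*\bar S_{21}^{-*}$; combining these with $R^{-*}=R$ (which holds since $R$ is inner), so that $S_{21}^{-*}=\bar S_{21}^{-*}R$, yields $L\bar S_{11}R=-S_{12}S^*\bar S_{21}^{-*}R=S_{11}$.

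The main technical obstacle is ensuring that both $L$ and $R$ are inner rather than merely unitary on the imaginary axis. The two natural choices $P=\check P$ and $P=\hat P$ each secure only one of the factors: taking $\bar S_{21}=\check S_{21}$ outer makes $R$ automatically inner but can leave $L$ with poles in $\overline{\Pi^+}$ (consider $\cS=\cS_{\hat P}$), and vice versa. To reconcile them, I would invoke the Hamiltonian structure of $\nocH$ from Lemma \ref{Adyn}: by the identity $\det(I_p-SS^*)=\det S_{21}\,\det S_{21}^*=\det S_{12}^*\,\det S_{12}$, each eigenvalue pair $\{\lambda,-\bar\lambda\}$ of $\nocH$ contributes a zero at $-\bar\lambda\in\overline{\Pi^+}$ to either $S_{21}$ or $S_{12}$ (or both). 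One then selects $P$ so that the eigenvalue picked into $\sigma(Z(P))$ from each pair places the resulting $\overline{\Pi^+}$-zero of $\bar S_{21}(P)$ or $\bar S_{12}(P)$ inside the zero set of $S_{21}$ or $S_{12}$; this cancels all potential $\overline{\Pi^+}$-poles of $R$ and $L$ and makes them inner.
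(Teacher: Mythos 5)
Your factorization skeleton and the block-by-block verification are exactly the paper's point of departure, and you have correctly located the crux: producing a \emph{single} Hermitian solution $P$ for which both $R=\bar S_{21}(P)^{-1}S_{21}$ and $L=S_{12}\,\bar S_{12}(P)^{-1}$ are inner, not merely unitary on $i\RR$. But the eigenvalue-selection argument you offer for that step does not hold up, for three reasons. First, for matrix-valued functions, arranging that a zero of $\bar S_{21}(P)$ sits at a point where $S_{21}$ also has a zero does not make the quotient analytic there: one needs the full local null structure (directions and partial multiplicities) of $\bar S_{21}(P)$ to divide that of $S_{21}$ in the lattice of inner functions, and nothing in the determinant identity $\det(I_p-SS^*)=\det S_{21}\det S_{21}^*$ controls directions. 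Second, the Hermitian solutions of (\ref{RE}) do not realize arbitrary assignments of ``one eigenvalue from each pair $\{\lambda,-\bar\lambda\}$'': they correspond to a restricted family of $\nocH$-invariant (Lagrangian, graph-type) subspaces, and when eigenvalues carry nontrivial Jordan structure many splittings are simply not achievable. Third, even granting a rich enough supply of solutions, you give no argument that a choice can be made \emph{consistently for both factors at once}; the two requirements pull in opposite directions (as you yourself observe with $\check P$ versus $\hat P$), and the pairing ``$\lambda$ into $\sigma(Z)$ forces $-\bar\lambda$ into $\sigma(-PZ^*P^{-1})$'' means every choice that helps $R$ at a point hurts $L$ at the reflected point unless a genuine divisibility statement is proved.

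The paper resolves this differently, and the mechanism is worth internalizing. It fixes $P=\check P$ once and for all, so that $R=\check S_{21}^{-1}\sigma_{21}$ is inner by outerness, while $L=\sigma_{12}\check S_{12}^{-1}$ is only unitary on the axis. It then takes the Douglas--Shapiro--Shields factorization $L=L_1L_2^*$ with $L_1,L_2$ right coprime inner, uses the Bezout identity (\ref{bezL}) to show that $S_{12}:=L_2^*\check S_{12}$ is stable, and uses Fuhrmann's realization theory to show it is still a \emph{minimal} right spectral factor; the transposed Corollary \ref{innerext} then manufactures the required $\cS_P$ with this $S_{12}$ as its upper-right block. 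Finally, a second coprimeness-plus-Bezout argument (showing $\cS_P$ and ${\rm diag}[R_2,I_p]$ are right coprime) is what forces $R_1=R_2^*R$ to be stable, hence inner. In other words, the existence of the right $P$ is obtained constructively from the given extension via coprime factorization, and the ``cancellation'' you want is delivered by Bezout identities rather than by matching spectra. To repair your proof you would need to replace the selection heuristic by an argument of this kind (or an equivalent divisibility argument in the inner-function lattice); as written, the last paragraph is the entire content of the proposition and it is not established.
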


\begin{proof}
Let
\[\check \cS=\begin{block}{cc} \check S_{11} & \check S_{12} \\
\check S_{21} & S \end{block}\]
be the  inner extension of $S$ of degree $n$ associated with the minimal
solution $\check P$ of (\ref{RE}), so that $\check S_{21}$ is  the outer left spectral factor of 
$I_p-S S^* $. 
For an arbitrary rational inner extension   
\[ \Sigma = \begin{block}{cc} \sigma_{11} & \sigma_{12} \\
\sigma_{21} & S \end{block}\]
of $S$, we mechanically obtain since $\Sigma\Sigma^{*}=\check \cS {\check \cS}^*=I_{2p}$ that
\[
\check S_{21}\check S_{21}^*  =  I_p-SS^*  = \sigma_{21}\sigma_{21}^* ~~~~{\rm and}~~~~
\check S_{12}^* \check S_{12}  =  I_p-S^* S  = \sigma_{12}^*\sigma_{12}.
\]
Therefore, computing as in  (\ref{factgen}), we can write
$\sigma_{21}=\check S_{21}R$ and $\sigma_{12}=L\check S_{12}$
where $R={\check S_{21}}^{-1}\sigma_{21}$ and $L=\sigma_{12}{\check S_{12}}^{-1}$ are rational 
matrices that are unitary on the imaginary axis. By the discussion after Remark \ref{remSF},
$R$ is inner. Next, using again that $\Sigma\Sigma^{*}=I_{2p}$, we get 
$\sigma_{11}=-\sigma_{12} S^* \sigma_{21}^{-*} =
- L \check S_{12} S^* \check S_{21}^{-*} R =  L \check S_{11} R$,
and therefore 
\[ \Sigma = 
\begin{block}{cc} L & 0 \\
 0 & I_p \end{block}
\begin{block}{cc} \check S_{11} & \check S_{12} \\
\check S_{21} & S \end{block}
 \begin{block}{cc} R & 0 \\
 0 & I_p \end{block}.
\]

If $L$ is inner, we have finished because (\ref{factextgen}) holds with $P=\check P$. 
Otherwise, being a unitary rational function, 
$L$ can be factored  as $L=L_{1}L_2^*$  where 
$L_1$ and $L_2$ are right coprime inner functions, so that (\ref{bezL}) holds for some
stable transfer functions $X_1$, $X_2$. In fact,
the existence of such a factorization follows from the so-called
Douglas-Shapiro-Shields factorization \cite{DSS} as carried over to matrix-valued strictly non cyclic 
functions in \cite{F81}, see {\it e.g.} \cite{BO} for a detailed discussion of the rational case
in discrete time that translates immediately to continuous time by
linear fractional transformation.

From (\ref{bezL}) we deduce that $L_2^*(s)=X_1(s)L(s) +  X_2(s)$, therefore
\begin{equation}
\label{L2*SRstable}
 \begin{block}{cc} L_2^* & 0 \\
 0 & I_p \end{block}
\begin{block}{cc} \check S_{11} & \check S_{12} \\
\check S_{21} & S \end{block}
 \begin{block}{cc} R & 0 \\
 0 & I_p \end{block}= \begin{block}{cc} X_1 & 0 \\
 0 & I_p \end{block}\Sigma +\begin{block}{cc} X_2 & 0 \\
 0 & I_p \end{block}{\check \cS}  \begin{block}{cc} R & 0 \\
 0 & I_p \end{block}
\end{equation}
is stable. In particular, if we put ${S}_{12}:=L_2^* \check S_{12}$, we see that
\begin{equation}
\label{colint}
\begin{block}{c} S_{12} \\
 S \end{block}=\begin{block}{cc} L_2^* & 0 \\
 0 & I_p \end{block}
\begin{block}{c} \check S_{12} \\
 S \end{block}
\end{equation}
is stable.
Now, multiplication by a rational function analytic in $\overline{\Pi}^-$ 
(including at infinity) followed by the projection onto stable rational functions 
(obtained by partial fraction extension) \emph{cannot increase the McMillan degree};
this follows at once from Fuhrmann's realization theory \cite{F81,BO}. Therefore, we deduce from
(\ref{colint}) that the degree of $[S_{12}^T~S^T]^T$ is at most the degree of 
$[{\check S_{12}}^T~S^T]^T$. But the latter is equal to $n$ for Corollary \ref{innerext} as applied to
${\check \cS}^T$ implies that $\check S_{12}$ is a minimal right spectral factor of $I_p-S^* S$.
Hence the degree of $[S_{12}^T~S^T]^T$ is $n$ and
$S_{12}$ is again a minimal right spectral factor of $I_p-S^* S$.
Thus by (the transposed version of) Corollary \ref{innerext}, there exists an  inner
extension of $S$ of  McMillan degree $n$ associated with
$ S_{12}$: 
\[\cS_{P}=\begin{block}{cc} S_{11} & S_{12} \\
 S_{21} & S \end{block}.\]

As seen after Remark \ref{remSF}, one has $S_{21}=\check S_{21}R_{2}$ for some inner 
matrix $R_2$. Moreover, by the inner character of $S_P$ and $S_{\check P}$ and in view
of (\ref{colint}), we get 
\[S_{11}=-S_{12}S^*S_{21}^{-*}=-L_2^*{\check S }_{12}S^*{\check S}_{21}^{-*}R_2^{-*}=
L_2^*{\check S }_{11}R_2^{-*}.\]
Altogether, this implies
\begin{equation}
\label{SPR2=L2S} 
\begin{block}{cc}  S_{11} &  S_{12} \\
 S_{21} & S \end{block}
\begin{block}{cc} R_2^* & 0 \\
 0 & I_p \end{block}
=
\begin{block}{cc} L_2^* & 0 \\
 0 & I_p \end{block}\begin{block}{cc} \check S_{11} & \check S_{12} \\
\check S_{21} & S \end{block}.
\end{equation}

Next,  \emph{we contend} that the inner functions ${\cal S}_P$ and
${\rm diag}\,[R_2,I_p]$ 
are right coprime. Indeed, if $R_c$
is a right common inner factor, we get in particular 
$R_c={\rm diag}\,[R_2, I_p]J^*$  for some inner $J$, so the last $p$ rows of both $J$ and $J^*$
are analytic in $\bar{\Pi}^+$, {\it i.e.} they are constant. This entails 
\begin{equation}
\label{factd}
R_{c}=U\begin{block}{cc} R_3 & 0 \\
 0 & I_p \end{block}\end{equation}
 for some constant unitary matrix $U$.
But then $\cS_{ P}R_{c}^{*}U$ is inner since $R_c$ divides $\cS_{P}$, and it is
an extension of $S$ by (\ref{factd}). Taking determinants, we see that this extension has
McMillan degree equal to ${\rm deg}\cS_P-{\rm deg}R_3$. As this degree is at least equal to 
that of $S$ which is also that of $\cS_P$, we conclude that $R_3$ thus also $R_c$ are constant, 
\emph{which proves our contention}.
 
By the coprimeness above, there exist stable rational matrices  
$X$ and $Y$ such that
\begin{equation}
\label{Bezoutcalin}
X\,\begin{block}{cc}  S_{11} &  S_{12} \\
 S_{21} & S \end{block}\,+\,Y\,\begin{block}{cc} R_2 & 0 \\
 0 & I_p \end{block}=I_{2p}.
\end{equation}
From (\ref{L2*SRstable})-(\ref{SPR2=L2S}), we see that the
 product $\cS_{ P}\,{\rm diag}[ R_2^{*} R ,I_p] $ is stable. 
Therefore, right multiplying  (\ref{Bezoutcalin}) by ${\rm diag}[ R_2^{*} R ,I_p] $,
we deduce that the latter is also stable hence $R_1:=R_{2}^* R$ is inner. Finally,
\[\Sigma=\begin{block}{cc} L_1 & 0 \\
 0 & I_p \end{block} \cS_P \begin{block}{cc} R_1 & 0 \\ 
 0 & I_p \end{block}\]
is indeed of the form (\ref{factextgen}), as wanted.\end{proof}

\section{ Symmetric unitary extensions}\label{symun}

We assume from now on that the Schur function $S$ is symmetric
i.e. $S=S^\transpose$ and we consider  a  symmetric 
realization,
\begin{equation}\label{realsym:S}
 S = \left( \begin{array}{c|c} A & B \\ \hline C & D 
\end{array}\right),~~~A=A^\transpose,~~~ B=C^\transpose,~~~ D=D^\transpose
\end{equation}
Such a realization always exists thanks to Theorem 5 in \cite{FH95}.

It follows that the matrices $\Galpha, \Gbeta, \Ggamma$ 
defined by (\ref{def:Galpha}), (\ref{def:Gbeta}) and (\ref{def:Ggamma})
satisfy
\begin{equation}
\label{cA:symmetry}
\left\{
\begin{array}{ccc}
\Galpha&=&\Galpha^\transpose\\
%\Gbeta&=&\Ggamma^\transpose.
\Gbeta&=&(\Ggamma)^\transpose.
\end{array}\right.
\end{equation}

If the extension $S_P$ associated to some solution $P$ of (\ref{RE}) {\it via} Theorem \ref{thm:GR} is
symmetric, then the matrix $Z$ defined in (\ref{Z}) must be similar to $-Z^*$ 
since they are, by the computations (\ref{dynZ}) and (\ref{dynZ*}), dynamics matrices of
$S_{21}^{-1}$ and $S_{12}^{-1}$ respectively.  Therefore,
in view of (\ref{chiAsplit}), the characteristic polynomial $\chi_\nocH$ must be
of the form $\pi(s)^2$. As this may not be the case, 
a symmetric inner extension of $S$ preserving the McMillan degree may well fail to exist.  However,
as we shall see ({\it cf.} also \cite{AV}), symmetric inner extensions of higher degree do exist. 
We will first give a simple method to construct (possibly unstable)  symmetric unitary extensions 
of $S$ using the inner extensions $S_P$ provided to us by Theorem \ref{thm:GR}.

As the Riccati equation (\ref{RE}) does admit a Hermitian solution and since
%the pair $(\Galpha,\Ggamma)$ in (\ref{def:Galpha})-(\ref{def:Ggamma}) is observable, 
the pair $(\Galpha,\hC)$ in (\ref{def:Galpha})-(\ref{def:Ggamma}) is observable,
as follows immediately from Hautus's test on using the observability of $(A,C)$,
the partial multiplicities ({\it i.e.} the sizes of the Jordan blocks)
of the pure imaginary eigenvalues of
$\nocH$ (if any) are all even, see  \cite[th.2.6]{LR1} or \cite[th. 7.3.1]{LR2}. 
Let $2 n_0$ be the dimension of the
spectral subspace of $\nocH$ corresponding to all its pure imaginary eigenvalues.
Using (\ref{simcalA}) and grouping together the roots of even multiplicity,
we can write the characteristic polynomial of $\nocH$ in the form 
\begin{equation}
\label{chiAfact}
\chi_\nocH(s)=\pi(s)^2\chi_\kappa^+(s)\chi_\kappa^-(s),
\end{equation}
where $\pi(s)$, $\chi_\kappa^+(s)$, and $\chi_\kappa^-(s)$ are polynomials in $s$, 
and where $\chi_\kappa^+$ has $\kappa$ simple roots in $\Pi^+$ while 
$\chi_\kappa^-=(\chi_\kappa^+)^*$. 
Then $\kappa$ is the number of distinct eigenvalues of $\nocH$ in $\Pi^+$ with
odd multiplicity, and $\pi(s)$ has degree greater than or equal to $n_0$ by what precedes.

\begin{proposition}\label{extension}
Assume $S$ is  a symmetric Schur function strictly contractive at
infinity, and let (\ref{realsym:S}) be a symmetric realization. For $P$ a Hermitian solution  
of (\ref{RE}), let further
\[{\cS}_P= \begin{block}{cc} S_{11} & S_{12} \\
S_{21} & S \end{block},\]
be the inner extension of the same degree associated by Theorem \ref{thm:GR}. \\
Then, the matrix $Q=S_{21}^{-1}S_{12}^\transpose$ is
well-defined, unitary on the imaginary axis, and 
\[ \Sigma_P=\begin{block}{cc} S_{11} & S_{12} \\
S_{21} & S \end{block}
 \begin{block}{cc} Q & 0 \\
 0 & I_p \end{block}
=
\begin{block}{cc} S_{11}S_{21}^{-1}S_{12}^\transpose & S_{12} \\
S_{12}^\transpose & S \end{block},
\]
is a symmetric extension of $S$ which is unitary on the imaginary axis. 
The McMillan degree of $Q$ coincides
with the rank of $P^{-\transpose}-P$, and $Q$ is  inner if and only if 
$P^{-\transpose}-P$ is positive semi-definite. In this case, 
$\Sigma_P$ is inner and its McMillan degree  is ${\rm deg} S+{\rm deg} Q$. 
\\
Moreover, ${\rm deg} \,Q$ is in any case greater than or equal to  $\kappa$, the number of distinct
eigenvalues of $\nocH$ in $\Pi^+$ with 
odd multiplicity. 
\end{proposition}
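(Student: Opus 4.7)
The plan is to establish the four assertions in order. \textbf{Well-definedness and unitarity of $Q$:} Strict contractivity of $S$ at infinity makes $D_{21}=(I_p-DD^*)^{1/2}$ invertible, so $S_{21}^{-1}$ exists and $Q=S_{21}^{-1}S_{12}^{T}$ is well-defined as a rational matrix. The key observation is that $S_{12}^{T}$ is itself a left spectral factor of $I_p-SS^{*}$: transposing the identity $S_{12}^{*}S_{12}+S^{*}S=I_{p}$ coming from $\cS_P^{*}\cS_P=I_{2p}$, and using $S=S^{T}$ along with the fact that transposition commutes with the para-hermitian $*$, yields $S_{12}^{T}(S_{12}^{T})^{*}+SS^{*}=I_{p}$. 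Therefore $QQ^{*}=S_{21}^{-1}(I_p-SS^{*})S_{21}^{-*}=S_{21}^{-1}(S_{21}S_{21}^{*})S_{21}^{-*}=I_{p}$, and $Q$ is unitary on $i\RR$. Unitarity of $\Sigma_P=\cS_P\,\diag(Q,I_p)$ on $i\RR$ is immediate.

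\textbf{Symmetry of $\Sigma_P$:} Its lower-right block is $S=S^{T}$ and its off-diagonal blocks are $S_{12}$ and $S_{12}^{T}$, which are mutual transposes. For the upper-left block I would substitute $S_{11}=-S_{12}S^{*}S_{21}^{-*}$ (from $\cS_P\cS_P^{*}=I_{2p}$), use $S=S^{T}$, the standard identity $S^{*}(I_p-SS^{*})^{-1}=(I_p-S^{*}S)^{-1}S^{*}$, and the spectral factorisations $S_{21}S_{21}^{*}=I_p-SS^{*}$ and $S_{12}^{*}S_{12}=I_p-S^{*}S$, to rewrite $S_{11}S_{21}^{-1}S_{12}^{T}$ as $-S_{12}^{-*}S^{*}S_{12}^{T}$; the same computation done on $S_{12}S_{21}^{-T}S_{11}^{T}$ yields the same expression, so the upper-left block equals its transpose.

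\textbf{Reduction to Proposition~\ref{spectralfactors}:} I would apply Corollary~\ref{innerext} to $\sigma:=S_{12}^{T}$. It is a left spectral factor of $I_p-SS^{*}$ as above, with value at infinity $D_{12}^{T}=D_{21}$ (since $(I_p-D^{*}D)^{T}=I_p-DD^{*}$ for symmetric $D$, and transposition commutes with the PSD square root on Hermitian matrices), and it is minimal because $[\sigma\ S]$ is the bottom block-row of $\cS_P^{T}$, hence of McMillan degree $n$. Corollary~\ref{innerext} therefore identifies $\sigma=\widetilde S_{21}$ for a unique extension $\cS_{\widetilde P}$ attached to a Hermitian solution $\widetilde P$ of~(\ref{RE}). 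I would then verify $\widetilde P=P^{-T}$ by substitution: using $\Galpha=\Galpha^{T}$, $\Gbeta=\Ggamma^{T}$, and $P=P^{*}$ (so $P^{T}=\bar P$), conjugating $\cR(P^{-T})=0$ by $P^{T}$ reduces it to the transpose of $\cR(P)=0$, which holds. Proposition~\ref{spectralfactors} then gives $\deg Q=\mathrm{rank}(P^{-T}-P)$ and $Q$ inner $\iff \widetilde P\geq P\iff P^{-T}-P\geq 0$. When $Q$ is inner, $\Sigma_P$ is a product of two inner functions, and since the McMillan degree of a product of inner functions is the sum of the degrees (as noted after~(\ref{Rprod})), $\deg\Sigma_P=n+\deg Q$.

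\textbf{The bound $\deg Q\geq\kappa$ (the main obstacle):} Let $\widetilde Z=\hA+P^{-T}\Ggamma$. Applying~(\ref{simcalA}) to both $P$ and $\widetilde P$ gives $\chi_\nocH=\chi_Z\chi_{-Z^{*}}=\chi_{\widetilde Z}\chi_{-\widetilde Z^{*}}$. Since $\widetilde Z$ is a dynamics matrix of $(S_{12}^{T})^{-1}=(S_{12}^{-1})^{T}$, it is similar to $(-PZ^{*}P^{-1})^{T}=-P^{-T}\bar Z P^{T}$ via~(\ref{dynZ*}); hence it has the same eigenvalues as $-\bar Z$, which are the same as those of $-Z^{*}$. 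Thus $\chi_{\widetilde Z}=\chi_{-Z^{*}}$ and $\chi_\nocH=\chi_Z\chi_{\widetilde Z}$. In the basis $V$ associated to $\Gamma=\widetilde P-P$ introduced in the proof of Proposition~\ref{spectralfactors}, the columns of $V_{1}$ lie in $\ker\Gamma$, so $V_{1}^{*}\Gamma\Ggamma V_{1}=0$, and both $V^{*}ZV$ and $V^{*}\widetilde ZV=V^{*}(Z+\Gamma\Ggamma)V$ are block-lower-triangular with the same $(1,1)$ block $Z_{11}$. Therefore $\chi_\nocH=\chi_{Z_{11}}^{\,2}\,\chi_{Z_{22}}\chi_{\widetilde Z_{22}}$. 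The factor $\chi_{Z_{11}}^{\,2}$ contributes only even multiplicities, so each of the $\kappa$ distinct odd-multiplicity eigenvalues of $\nocH$ in $\Pi^{+}$ must appear in $\chi_{Z_{22}}\chi_{\widetilde Z_{22}}$; together with their $\kappa$ distinct reflections in $\Pi^{-}$ (distinct from the $\Pi^{+}$ ones because pure-imaginary eigenvalues of $\nocH$ have only even partial multiplicities, hence even total multiplicities, and are excluded from the $\kappa$), this forces $\deg(\chi_{Z_{22}}\chi_{\widetilde Z_{22}})=2\deg Q\geq 2\kappa$, whence $\deg Q\geq\kappa$.
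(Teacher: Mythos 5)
Your proposal follows essentially the same route as the paper: reduce everything to Proposition~\ref{spectralfactors} by recognizing $S_{12}^{\transpose}$ as the minimal left spectral factor attached to the Riccati solution $P^{-\transpose}$, and obtain the bound $\deg Q\geq\kappa$ from the fact that $\chi_{Z_{11}}^{2}$ divides $\chi_{\nocH}$, where $Z_{11}$ is the compression of $Z$ to $\ker(P^{-\transpose}-P)$. There is, however, one under-justified step. To identify the solution $\widetilde P$ that Corollary~\ref{innerext} attaches to the spectral factor $S_{12}^{\transpose}$ with $P^{-\transpose}$, you only check that $P^{-\transpose}$ solves the Riccati equation~(\ref{RE}). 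That shows $P^{-\transpose}$ is \emph{some} Hermitian solution, but since $P\mapsto\cS_{P}$ is a bijection onto the degree-$n$ extensions, you must show that the extension built from $P^{-\transpose}$ via (\ref{Ireal})--(\ref{b}) actually has lower-left block $S_{12}^{\transpose}$ --- equivalently, that substituting $P^{-\transpose}$ into (\ref{b}) returns $C_{1}^{\transpose}$, i.e.\ that $\cS_{P^{-\transpose}}=\cS_{P}^{\transpose}$. This is a short computation using $A=A^{\transpose}$, $B=C^{\transpose}$, $D=D^{\transpose}$, and it is exactly the identity the paper states at the outset of its proof; without it, Proposition~\ref{spectralfactors} gives $\deg Q=\mathrm{rank}(\widetilde P-P)$ but not $\mathrm{rank}(P^{-\transpose}-P)$, nor the criterion $P^{-\transpose}\geq P$ for innerness.

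Everything else checks out, and one detail of your argument for the bound is a genuine (if small) simplification. Where the paper establishes that $V^{\transpose}\widetilde Z\overline V$ is block upper triangular with $(1,1)$ block $Z_{11}^{\transpose}$ by exploiting $P^{\transpose}\overline V_{1}=P^{-1}\overline V_{1}$ (formulas (\ref{unerel})--(\ref{semblableZtilde})), you instead write $\widetilde Z=Z+\Gamma\,\Ggamma$ and observe that the first block row of $V^{*}\Gamma\,\Ggamma V$ vanishes because $V_{1}^{*}\Gamma=0$, so that $V^{*}\widetilde ZV$ is block lower triangular with the \emph{same} $(1,1)$ block $Z_{11}$. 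Combined with $\chi_{\widetilde Z}=\chi_{-Z^{*}}$ and (\ref{chiAsplit}) this yields $\chi_{\nocH}=\chi_{Z_{11}}^{2}\chi_{Z_{22}}\chi_{\widetilde Z_{22}}$ directly, and your counting of the $2\kappa$ distinct odd-multiplicity roots is equivalent to the paper's estimate $\dim\ker(P^{-\transpose}-P)\leq\deg\pi=n-\kappa$. The direct verification of the symmetry of the upper-left block of $\Sigma_{P}$ is also fine (the paper leaves it implicit in $\cS_{P^{-\transpose}}=\cS_{P}^{\transpose}$).
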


\begin{proof}
From (\ref{cA:symmetry}) we see that 
 $\cR(P)=0$  if and only if  $\cR(P^{-\transpose})=0$, and that
 $\cS_{P^{-\transpose}}= \cS_P^\transpose$. Therefore, we may appeal to  
Proposition \ref{spectralfactors} with $\widetilde P=P^{-\transpose}$
and $\widetilde S_{21}=S_{12}^T$.
Thus,  $Q=S_{21}^{-1}S_{12}^\transpose$ is unitary on the imaginary axis, its degree $d$ is equal to the rank of $P^{-T}-P$ and $Q$ is inner if
 and only if $P^{-T}\geq P$. 
The computations in the proof of Proposition \ref{spectralfactors} apply,
so if (\ref{V}) is the singular value decomposition of
$\Gamma=P^{-T}-P$ and $Z$ is defined by (\ref{Z}), we get from (\ref{partiV}) and (\ref{Z21null}) that 
 \begin{equation}
\label{semblableZ}
V^*ZV=\left[\begin{array}{cc}V_1^*Z V_1
    & V_1^*Z V_2 \\ V_2^*Z V_1& V_2^*Z
    V_1\end{array}\right]=\left[\begin{array}{cc}Z_{11} 
    & 0 \\Z_{21} & Z_{22}\end{array}\right].
\end{equation}
On the other hand, if we set ${\widetilde Z}:=-P Z^* P^{-1}$, it follows from (\ref{RE}) and (\ref{Z})
that
\begin{equation}
\label{unerel}
{\widetilde Z}= -P Z^* P^{-1}=-P(\Galpha^*+\Ggamma P)^* P^{-1}=(\Galpha P+\Gbeta)^*
P^{-1}=\Galpha +\Gbeta P^{-1}.
\end{equation}
But since the columns of $V_1$ span the kernel of $\Gamma$,
we get $V_1^*(P-P^{-T})=0$ that transposes into $P^T \overline V_1=P^{-1}\overline V_1$, hence in
view of (\ref{unerel}) and (\ref{cA:symmetry})
\[{\widetilde Z} \overline V_1= (\Galpha +\Gbeta P^{T})\overline V_1=Z^T
\overline V_1.\]
As $V$ is unitary, we have thus arrived at a similarity relation of the form:
\begin{equation}
\label{semblableZtilde}
V^T{\widetilde Z}\overline V= \left[\begin{array}{cc}V_1^T{\widetilde Z} \overline V_1
    & V_1^T{\widetilde Z} \overline V_2 \\ V_2^T{\widetilde Z}\overline V_1&
    V_2^T{\widetilde Z} \overline
    V_2\end{array}\right]\\
=\left[\begin{array}{cc}Z_{11}^T 
    & \widetilde Z_{12}  \\0 & \widetilde Z_{22} \end{array}\right].
\end{equation}
Now, since $\widetilde Z$ is similar to $-Z^*$, we see from (\ref{semblableZ}), (\ref{semblableZtilde}),
and (\ref{chiAsplit}) that $\chi_{Z_{11}}^2(s)$ must divide
$\chi_\nocH(s)$. Consequently the number of rows (or columns) of $Z_{11}$, which is the dimension
of the kernel of $P^{-T}-P$, cannot exceed the degree of $\pi(s)$ 
in (\ref{chiAfact}), namely $n-\kappa$. Therefore ${\rm rank}
(P^{-T}-P)\geq \kappa$, as announced.
\end{proof}

\begin{remark}
Note that ${\rm deg}Q=0$, i.e. $Q$ is constant, 
if and only if $P^{-\transpose}=P$.
\end{remark}

\section{Minimal symmetric inner extensions}\label{minsym}
\label{SecMinSymCom}

\begin{lemma}
\label{PminPmax}
 Let $\check P$ and $\hat P$ be the minimal and the maximal solution to  the
 Riccati equation (\ref{RE}) associated with a symmetric Schur
function strictly contractive at infinity. Then
\[\check P^{-T}=\hat P.\]
\end{lemma}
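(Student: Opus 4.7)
The plan is to exhibit $P\mapsto P^{-T}$ as an involution on the set of Hermitian solutions of the Riccati equation (\ref{RE}) that reflects the spectrum of the closed-loop matrix $Z_P:=\Galpha+P\Ggamma$ across the imaginary axis; this will force it to swap the two extremal solutions $\check P$ and $\hat P$.

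First I would verify that whenever $P$ is a Hermitian solution of (\ref{RE}), so is $P^{-T}$. Invertibility of $P$ is guaranteed by Remark \ref{Pinv}, and $P^{-T}$ is Hermitian since $P$ is. The Riccati identity $\cR(P^{-T})=0$ is obtained by transposing $\cR(P)=0$, using $\Galpha^T=\Galpha$ and $\Gbeta^T=\Ggamma$ from (\ref{cA:symmetry}) (together with $\bar\Galpha=\Galpha^*$, which holds because $\Galpha=\Galpha^T$), and then conjugating by $P^{-T}$ on both sides. This is the observation already invoked at the beginning of the proof of Proposition \ref{extension}.

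Second, I would relate $Z_P$ and $Z_{P^{-T}}$. Identity (\ref{unerel}) from the proof of Proposition \ref{extension}, namely $\widetilde Z:=-PZ_P^{\,*}P^{-1}=\Galpha+\Gbeta P^{-1}$, combined with $\Gbeta=\Ggamma^T$, yields $\widetilde Z=(Z_{P^{-T}})^T$. Taking transposes on both sides of $\widetilde Z=-PZ_P^{\,*}P^{-1}$ then shows that $Z_{P^{-T}}$ is similar to $(-Z_P^{\,*})^T=-\overline{Z_P}$, so
\[
\sigma(Z_{P^{-T}})=\{-\bar\lambda : \lambda\in\sigma(Z_P)\},
\]
i.e.\ the involution reflects the closed-loop spectrum across the imaginary axis.

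Applied to $P=\check P$, whose defining property is $\sigma(Z_{\check P})\subset\overline\Pi^-$, this gives $\sigma(Z_{\check P^{-T}})\subset\overline\Pi^+$, which is exactly the characterizing property of $\hat P$ recalled just before Proposition \ref{spectralfactors}; uniqueness therefore forces $\check P^{-T}=\hat P$. The only substantive step is the similarity argument of the second paragraph, but it reduces directly to (\ref{unerel}) and the coefficient symmetries (\ref{cA:symmetry}), so I do not anticipate a genuine obstacle.
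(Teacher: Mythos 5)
Your proposal is correct, but it follows a genuinely different route from the paper's. The paper's own proof is purely order-theoretic and takes three lines: every Hermitian solution of (\ref{RE}) is positive definite (Remark \ref{Pinv}), the map $P\mapsto P^{-T}$ is an involution of the solution set (the symmetry observation you also use), and it reverses the Loewner order, since inversion is anti-monotone on positive definite matrices and transposition preserves positive semidefiniteness; hence $P\geq\check P$ for all solutions $P$ gives $\check P^{-T}\geq P^{-T}$ for all $P$, and because $P^{-T}$ sweeps out the whole solution set, $\check P^{-T}$ dominates every solution and must equal $\hat P$. Your argument instead invokes the spectral characterization of the extremal solutions: using (\ref{unerel}) and (\ref{cA:symmetry}) you correctly obtain $(Z_{P^{-T}})^{T}=\Galpha+\Gbeta P^{-1}=-PZ_P^{*}P^{-1}$, hence $Z_{P^{-T}}=-P^{-T}\overline{Z_P}\,P^{T}$ is similar to $-\overline{Z_P}$, so the involution reflects the closed-loop spectrum across the imaginary axis and sends the unique solution with $\sigma(Z)\subset\overline{\Pi}^-$ to the unique one with $\sigma(Z)\subset\overline{\Pi}^+$. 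Both arguments are sound; the paper's is shorter and needs no spectral input beyond positivity, while yours makes explicit the spectral mechanism (how $P\mapsto P^{-T}$ redistributes the eigenvalues of $\nocH$ between the two half-planes) that the paper only brings out later, in the proofs of Proposition \ref{maxdegsymext} and Theorem \ref{mindegrep}.
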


\begin{proof}
Since $\check P$ is the minimal solution, for each
solution $P$ we have that $P\geq \check P$.
By symmetry, $P$ is a solution if and only if $P^{-T}$ is  a solution and moreover
$\check P^{-T}\geq P^{-T}$,
so that $\check P^{-T}$ must be the maximal solution.
\end{proof}

A symmetric inner extension of the symmetric Schur function $S$ may now be
obtained as follows. 

\begin{proposition}
\label{maxdegsymext}
Let 
\[S_{\check P} =   \left[ \begin{array}{cc}
    {\check S}_{11} &{\check S}_{12}\\ 
    {\check S}_{21} & S \\ 
  \end{array}\right] \]
be the extension associated with the minimal solution $\check P$ to
the Riccati equation (\ref{RE}). The  symmetric extension $\Sigma_{ \check P}$  given by 
 Proposition \ref{extension}:
  \[ \Sigma_{ \check P}=  \left[ \begin{array}{cc}{\check S}_{11} &{\check S}_{12}\\ 
    {\check S}_{21} & S \\ 
  \end{array}\right] 
   \left[ \begin{array}{cc}{\check Q} &0\\ 
   0 & I_p \\ 
  \end{array}\right],   ~~~~{\rm with}~{\check Q}={\check S}_{21}^{-1}{\check S}_{12}^T,\]
is inner and has degree $2n-n_0$.
\end{proposition}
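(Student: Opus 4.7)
The plan is first to check that $\Sigma_{\check P}$ is inner via Proposition~\ref{extension}, and then to compute its degree by identifying the dynamics matrix of a minimal realization of $\check Q$ and analyzing its spectrum.

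Innerness is immediate. Lemma~\ref{PminPmax} gives $\hat P = \check P^{-T}$, and since $\check P\leq \hat P$ we have $\check P^{-T}-\check P = \hat P-\check P\succeq 0$. The second half of Proposition~\ref{extension} then asserts that $\check Q$ is inner and that $\Sigma_{\check P}$ is inner with McMillan degree $n+\deg\check Q$, so everything reduces to showing $\deg\check Q = n-n_0$.

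By Proposition~\ref{extension}, $\deg\check Q = \mbox{rank}(\hat P-\check P)$, and a minimal realization of $\check Q$ is furnished by formula~(\ref{Qmin}) applied with $P=\check P$ and $\widetilde P=\hat P$ (here $\hat S_{21}=\check S_{12}^T$ thanks to the symmetry relation $\cS_{\check P^{-T}}=\cS_{\check P}^T$). In that realization the dynamics matrix $Z_{22}$ appears as the bottom-right block of the triangularization $V^*\check Z V = \begin{block}{cc} Z_{11} & 0 \\ Z_{21} & Z_{22} \end{block}$ associated with the decomposition $\CC^n = \ker(\hat P-\check P)\oplus \mbox{Im}(\hat P-\check P)$. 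I would then combine two spectral facts about these blocks. On the algebraic side, the computation already performed in the proof of Proposition~\ref{extension} shows $\sigma(Z_{11})\subset i\RR$, and $\sigma(Z_{11})$ is a submultiset of $\sigma(\check Z)$. On the analytic side, since $\check Q$ is inner its poles --- which are precisely the eigenvalues of the minimal dynamics matrix $Z_{22}$ --- must lie in $\Pi^-$. But the defining property $\sigma(\check Z)\subset\overline\Pi^-$ of the minimal solution, together with the Hamiltonian factorization~(\ref{chiAsplit}) and the fact that the pure imaginary eigenvalues of $\nocH$ have total multiplicity $2n_0$, implies that $\chi_{\check Z}$ has exactly $n_0$ roots on $i\RR$ and $n-n_0$ roots in $\Pi^-$. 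Since $\sigma(Z_{11})\sqcup\sigma(Z_{22})=\sigma(\check Z)$ as multisets with $\sigma(Z_{11})\subset i\RR$ and $\sigma(Z_{22})\cap i\RR=\emptyset$, it follows that $\sigma(Z_{22})$ is exactly the $\Pi^-$ part of $\sigma(\check Z)$, of cardinality $n-n_0$. Therefore $\deg\check Q=n-n_0$ and $\deg\Sigma_{\check P}=2n-n_0$.

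I expect the main obstacle to be identifying $\sigma(Z_{22})$ exactly. Interlocking the algebraic Riccati constraint $\sigma(Z_{11})\subset i\RR$ with the analytic innerness constraint $\sigma(Z_{22})\subset\Pi^-$ is what forces the exact equality; either ingredient in isolation would only yield one of the two inequalities between $\mbox{rank}(\hat P-\check P)$ and $n-n_0$.
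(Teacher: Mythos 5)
Your proof is correct, and for the key step --- showing that ${\rm rank}(\hat P-\check P)=n-n_0$ --- it takes a genuinely different route from the paper. The paper disposes of this in one line by citing a classical result of Lancaster and Rodman (\cite[th.2.12]{LR1}): for any solution $P$ of (\ref{RE}), $\ker(\hat P-\check P)$ is the spectral subspace of $Z=\Galpha+P\Ggamma$ associated with its pure imaginary eigenvalues, hence has dimension $n_0$. You instead reprove the special case you need from the machinery already set up in Propositions \ref{spectralfactors} and \ref{extension}: the block-triangularizations of $\check Z$ and $\widetilde Z$ adapted to $\ker(\hat P-\check P)\oplus{\rm Im}(\hat P-\check P)$ give $\sigma(\check Z)=\sigma(Z_{11})\sqcup\sigma(Z_{22})$, the innerness of $\check Q$ forces $\sigma(Z_{22})\subset\Pi^-$, and a counting argument via (\ref{chiAsplit}) pins down $\sigma(Z_{11})$ as exactly the $n_0$ imaginary eigenvalues. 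This buys self-containedness (no appeal to the Riccati literature beyond what the paper already uses to define $\check P$ and $\hat P$) at the cost of length, and it only establishes the kernel-dimension statement for the extremal pair $(\check P,\hat P)$ rather than for arbitrary $P$, which is all the proposition needs. One point of bookkeeping: the inclusion $\sigma(Z_{11})\subset i\RR$ does \emph{not} follow from ``the computation already performed in the proof of Proposition \ref{extension}'' alone --- that computation only yields $\chi_{Z_{11}}\mid\chi_{\check Z}$ and $\chi_{Z_{11}}\mid\chi_{\widetilde Z}$, and for a general solution $P$ the block $Z_{11}$ certainly need not have imaginary spectrum. You must combine those two divisibilities with the spectral characterizations $\sigma(\check Z)\subset\overline{\Pi}^-$ and $\sigma(\widetilde Z)=\sigma(\Galpha+\hat P\Ggamma)\subset\overline{\Pi}^+$ of the minimal and maximal solutions, so that $\sigma(Z_{11})\subset\overline{\Pi}^-\cap\overline{\Pi}^+=i\RR$. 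Since you invoke exactly these characterizations a sentence later, this is a misattribution rather than a gap, but it should be stated explicitly.
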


\begin{proof}
Since $\check P$ is the minimal solution of the Riccati equation, we have
that $\check P^{-T}-\check P\geq 0$, hence
by Proposition \ref{extension} the extension $\Sigma_{\check P}$  is
inner and  has degree $n+d$ where $d$ is the rank of
$\check P^{-T}-\check P$. By Lemma \ref{PminPmax}
$\check P^{-T}= \hat P$, and if we let  ${\rm ker}(\hat P-\check P)=\cN$ it is a classical fact
(see {\it e.g.} \cite[th.2.12]{LR1} or \cite{LR2}) that  $\cN$ is, for any
solution $P$ to (\ref{RE}), the spectral subspace of 
$Z=\Galpha+ P \Ggamma$ corresponding to all of its pure imaginary eigenvalues, and that
it  has dimension $n_0$. Therefore ${\rm deg} \,\Sigma_{\check P}=2n-n_0$, as desired.
\end{proof}

Finally, we shall construct from $\Sigma_{\check P}$ a symmetric inner extension of $S$ of minimal degree.
For this, we first establish a factorization
property which strengthens the
Potapov factorization (\ref{Potapov}) in the symmetric case.

For $\rm Re \;\xi>0$, recall from (\ref{defbxi}) and (\ref{defBxi})
the definitions of $b_\xi$ and $B_\xi$.
Let $U$ be a unitary matrix whose first column is a unit vector $u\in\CC^p$,
then 
\begin{equation}\label{Bxiu}
B_{\xi,u}=U B_\xi(s)U^*=I_p+( b_\xi(s)-1)uu^*
\end{equation}
is an elementary Blaschke factor with
the following properties (compare \cite[chap.1]{Dym}):
\begin{eqnarray}
\label{invQxiu}
B_{\xi,u}^*(s)&=&I_p+( b_\xi(s)^{-1}-1)uu^*\\
\label{detQxiu}
\det B_{\xi,u}(s)&=&b_\xi(s).
\end{eqnarray}
Note that any elementary Blaschke factor can be written in the form 
$B_{\xi,u}V$ for some unit vector $u$ and some unitary matrix  $V$  .

\begin{lemma}\label{comporthdec}
For $F$ a $p\times p$ symmetric complex matrix, there exists a unitary matrix $U$ such that 
\begin{equation}
\label{symmsvd}
F=U\Lambda U^{T}
\end{equation}
with $\Lambda$ a non-negative diagonal matrix.If $F$ has rank $k$, the $p-k$ first columns of $U$ 
may be construed to be any orthonormal basis of the kernel of $F$.
\end{lemma}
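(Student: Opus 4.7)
The assertion is the classical Autonne--Takagi factorization of a complex symmetric matrix. I would prove it by induction on $p$; the freedom to prescribe the first $p-k$ columns of $U$ will then come from the freedom to initialize the induction with any chosen unit vector in $\ker F$.

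The heart of the induction is the following step: produce a unit vector $u_0\in\CC^p$ and a scalar $\sigma\geq 0$ such that $Fu_0=\sigma\bar u_0$. I would obtain $u_0$ by maximizing the continuous function $u\mapsto|u^T F u|$ on the compact unit sphere of $\CC^p$, then rotating the phase of the maximizer so that $u_0^T F u_0=\sigma\geq 0$. For each $v$ Hermitian-orthogonal to $u_0$, the curve $u(t)=(\cos t)u_0+(\sin t)v$ stays on the sphere, and differentiating $|u(t)^T F u(t)|^2$ at $t=0$, where it attains its maximum, together with the same computation applied to $iv$, forces $v^T F u_0=0$. Hence the bilinear functional $v\mapsto v^T F u_0$ vanishes on the Hermitian orthogonal complement of $u_0$, which coincides with the bilinear annihilator of $\bar u_0$; thus $Fu_0\in\CC\bar u_0$, and plugging back into $u_0^T F u_0=\sigma$ identifies the scalar so that $Fu_0=\sigma\bar u_0$.

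Next, extend $u_0$ to the first column of a unitary matrix $U_0$. Exploiting the identities $u_j^T\bar u_i=u_i^*u_j=\delta_{ij}$, which hold for the Hermitian-orthonormal columns of $U_0$, a direct computation gives $U_0^T F U_0=\mathrm{diag}(\sigma, F_1)$ with $F_1$ complex symmetric of size $(p-1)\times(p-1)$. Since $U_0$ is unitary one has $(U_0^T)^{-1}=\bar U_0$, so $F=\bar U_0\,\mathrm{diag}(\sigma, F_1)\, U_0^*$, and the induction hypothesis applied to $F_1=V_1\Lambda_1 V_1^T$ together with the assembly $V=\bar U_0\cdot\mathrm{diag}(1,\bar V_1)$ and $\Lambda=\mathrm{diag}(\sigma,\Lambda_1)$ yields the desired $F=V\Lambda V^T$ with $V$ unitary and $\Lambda$ non-negative diagonal.

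For the kernel claim, $\mathrm{rank}\,\Lambda=\mathrm{rank}\,F=k$ forces exactly $p-k$ diagonal entries of $\Lambda$ to vanish, and after a permutation of columns of $V$ these may be brought to the first block. The option to prescribe any orthonormal basis of $\ker F$ as the first $p-k$ columns then follows from the flexibility in picking $u_0$: choosing $u_0$ to be any prescribed unit vector of $\ker F$ automatically forces $\sigma=0$ in the relation $Fu_0=\sigma\bar u_0$, after which the deflated symmetric matrix still has a kernel of dimension $p-k-1$ and the induction can absorb the next basis vector, and so on until the prescribed basis is exhausted. The main point of care throughout will be the book-keeping between the bilinear pairing $v^Tw$ and the Hermitian pairing $v^*w$, since $U$ is unitary but not orthogonal and the key identity is $u_j^T\bar u_i=\delta_{ij}$ rather than the false $u_j^T u_i=\delta_{ij}$; this distinction is what separates $U^T F U$ from $U^* F U$ in the deflation step and governs the precise relation between the leading columns of the factoring unitary and the kernel of $F$.
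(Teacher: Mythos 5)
Your existence argument is a correct, self-contained derivation of the Autonne--Takagi factorization, and it is a genuinely different route from the paper's: the paper simply cites \cite[cor.4.4.4]{HJ} for (\ref{symmsvd}) and then gets the kernel statement by permuting the columns of $U$ and right-multiplying by a block-diagonal unitary $\mathrm{diag}[V,I_k]$, which leaves $U\Lambda U^T$ unchanged because the upper-left block of $\Lambda$ vanishes. Your variational construction of a unit vector with $Fu_0=\sigma\bar u_0$ and the deflation $U_0^TFU_0=\mathrm{diag}(\sigma,F_1)$ are sound (when $\sigma=0$ at the maximum, polarization gives $F=0$ and there is nothing to prove). The one slip in that part is the assembly: since $F=\bar U_0\,\mathrm{diag}(\sigma,F_1)\,\bar U_0^{T}$ and the inductive hypothesis gives $F_1=V_1\Lambda_1V_1^T$, the correct factor is $V=\bar U_0\,\mathrm{diag}(1,V_1)$, not $\bar U_0\,\mathrm{diag}(1,\bar V_1)$.

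The kernel claim, however, has a genuine conjugation gap, and your own formulas expose it. From $F=U\Lambda U^T$ one gets $F\bar u_j=\lambda_j u_j$ for the columns $u_j$ of $U$, so the columns attached to the zero singular values span $\overline{\ker F}=\ker F^{*}$, \emph{not} $\ker F$, and these spaces differ in general: for $F=\left(\begin{array}{cc}1&i\\ i&-1\end{array}\right)$ the kernel is $\CC(1,i)^T$, yet in every Takagi factorization the zero-singular-value column is proportional to $(1,-i)^T$. Consistently with this, your recursion seeded at a prescribed $u_0\in\ker F$ places the \emph{conjugate} $\bar u_0$ (the leading column of $\bar U_0$) into the assembled factor, so the final paragraph's conclusion that the prescribed basis itself appears as the first $p-k$ columns does not follow from the construction, and indeed cannot hold as literally stated. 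What is true --- and what the paper actually uses afterwards, in the proof of Lemma \ref{IntCondLemma}, where the factorization is written $U^{T}T(\xi)U=\mathrm{diag}(0,\dots,0,\rho_1,\dots,\rho_r)$ with first column $u\in\ker T(\xi)$ --- is that one may prescribe the first $p-k$ columns of the $U$ appearing in $U^{T}FU=\Lambda$, equivalently the conjugates of the leading columns of the $U$ in $F=U\Lambda U^{T}$. Your deflation step delivers exactly this version if you run the induction on the statement $U^{T}FU=\Lambda$; alternatively, seed the recursion with the conjugates of the prescribed kernel basis. Either fix is one line, but as written the last step fails.
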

\begin{proof} The existence of (\ref{symmsvd}) is proved in \cite[cor.4.4.4]{HJ} under the name of
Takagi's factorization. Permuting the columns of $U$, we may arrange the diagonal entries of
$\Lambda$ in non-decreasing order and then the first $p-k$ columns form an orthogonal basis of 
${\rm Ker}F$. Right multiplying $U$ by a block diagonal unitary matrix of the form
${\rm diag}\,[V,I_k]$ we may clearly trade that basis for any other.
\end{proof}

\begin{lemma}\label{IntCondLemma}
Let $T(s)$ be a symmetric inner function and $\xi\in\Pi^+$
a  zero of $T(s)$. Suppose that there exists 
a  unit vector  $u$ such that
the interpolation conditions
\begin{eqnarray}\label{IntCond}
T(\xi)u&=&0\\
u^T T'(\xi) u&=&0
\end{eqnarray}
are satisfied. 
Defining $B_{\xi,u}(s)$ as in (\ref{Bxiu}), then, 
\[R(s)= B_{\xi,u}(s)^{-T }T(s)B_{\xi,u}(s)^{-1},\]
is analytic at $\xi$ and thus a symmetric inner function of degree $N-2$.
\end{lemma}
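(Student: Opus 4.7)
The plan is to verify four properties of $R$ in turn: symmetry, unitarity on $i\RR$, analyticity at $\xi$, and the degree count. Symmetry is immediate from $T^T = T$: $R^T = B_{\xi,u}^{-T} T^T B_{\xi,u}^{-1} = R$. For unitarity on $i\RR$, note that $B_{\xi,u}$ is inner, so $B_{\xi,u}(i\omega)$ is unitary for every $\omega\in\RR$; since the transpose of a unitary matrix is unitary, $B_{\xi,u}^T$ is also inner, and a direct computation yields $R(i\omega)R(i\omega)^* = I$. Away from $\xi$ in $\overline{\Pi^+}$ all three factors making up $R$ are analytic (observe in particular that $b_\xi(\infty)=1$, so $B_{\xi,u}(\infty)=I$), so the only issue left is analyticity at $\xi$.

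The main obstacle is showing analyticity at $\xi$, since both $B_{\xi,u}^{-1}$ and $B_{\xi,u}^{-T}$ have a simple pole there. Using (\ref{invQxiu}) and its transpose, decompose
\[
B_{\xi,u}(s)^{-1} = (I-uu^*) + \frac{uu^*}{b_\xi(s)},\qquad
B_{\xi,u}(s)^{-T} = (I-\bar u u^T) + \frac{\bar u u^T}{b_\xi(s)}.
\]
I first claim $T(s)B_{\xi,u}(s)^{-1}$ is analytic at $\xi$: the only potentially singular piece is $T(s)uu^*/b_\xi(s)$, and since $T(\xi)u=0$ while $b_\xi$ has a simple zero at $\xi$, the Taylor expansion $T(s)u = T'(\xi)u\,(s-\xi) + O((s-\xi)^2)$ shows
\[
\lim_{s\to\xi}\frac{T(s)u}{b_\xi(s)} = \frac{T'(\xi)u}{b_\xi'(\xi)},
\]
so that $T(s)B_{\xi,u}(s)^{-1}$ is analytic at $\xi$ with value $\frac{T'(\xi)u}{b_\xi'(\xi)}u^*$ there. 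Left-multiplying by $B_{\xi,u}^{-T}$, the only possibly singular contribution is the term
$\bar u\,[u^T T(s)B_{\xi,u}(s)^{-1}]/b_\xi(s)$. Evaluating the bracketed row vector at $\xi$ yields
\[
u^T\cdot\frac{T'(\xi)u}{b_\xi'(\xi)}u^* = \frac{u^T T'(\xi)u}{b_\xi'(\xi)}u^*,
\]
which vanishes by the second interpolation condition. Hence the numerator has a zero at $\xi$ cancelling the simple pole of $1/b_\xi$, and $R$ is analytic at $\xi$.

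It follows that $R$ is a rational symmetric matrix function, analytic in $\overline{\Pi^+}$ and unitary on $i\RR$, hence a symmetric inner function. For the degree, rewrite the definition as $T = B_{\xi,u}^T\, R\, B_{\xi,u}$. This expresses $T$ as a product of three inner functions: $B_{\xi,u}$ and $B_{\xi,u}^T$ both have determinant $b_\xi$, hence McMillan degree $1$. Because the McMillan degree of a product of inner functions is additive, as recalled after (\ref{Rprod}) via the Potapov factorization, $N = 1 + \deg R + 1$, so $\deg R = N-2$, as claimed.
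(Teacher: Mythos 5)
Your argument follows the same route as the paper's proof: first peel off $B_{\xi,u}(s)^{-1}$ on the right, using $T(\xi)u=0$ against the simple zero of $b_\xi$ at $\xi$, then kill the remaining singular term coming from the left factor via the condition $u^TT'(\xi)u=0$, and finally count degrees by additivity of the McMillan degree over products of inner functions. There is, however, one computational slip that leaves a small but real hole. The value of $T_1(s):=T(s)B_{\xi,u}(s)^{-1}$ at $s=\xi$ is not $\frac{T'(\xi)u}{b_\xi'(\xi)}u^*$ but $T(\xi)+\frac{T'(\xi)u}{b_\xi'(\xi)}u^*$, because the regular part $T(s)(I-uu^*)$ contributes $T(\xi)(I-uu^*)=T(\xi)$ at $\xi$ (it need not vanish: $\xi$ being a zero of $T$ does not make $T(\xi)=0$). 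Consequently the bracketed row vector is $u^TT_1(\xi)=u^TT(\xi)+\frac{u^TT'(\xi)u}{b_\xi'(\xi)}u^*$, and only the second summand is disposed of by the stated interpolation condition; you still need $u^TT(\xi)=0$. This does hold, but only by invoking the symmetry of $T$: $u^TT(\xi)=\bigl(T(\xi)^Tu\bigr)^T=\bigl(T(\xi)u\bigr)^T=0$ --- exactly the point the paper makes explicit in its own proof. With that one line inserted your argument is complete and coincides in substance with the paper's; the remaining verifications (symmetry of $R$, unitarity on $i\RR$, analyticity elsewhere in $\overline{\Pi^+}$, and the degree count $N=1+\deg R+1$) are all correct.
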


\begin{proof}
We give a proof of this result 
which follows Potapov's approach to 
the multiplicative structure of $J$-inner functions \cite{Pot60}.
For simplicity, we use Landau's notation $O(s)^k$ for the class of (scalar or matrix-valued) functions $f(s)$
such that $\|f(s)\|/|s|^k$ is bounded for $|s|$ sufficiently small.
We also put $\|f(s)\|$ for the operator norm of $f(s)$.
Write the Taylor expansion of
$T(s)$ about $\xi$ as
\[T(s)=T(\xi)+(s-\xi)T^\prime(\xi)+O(s-\xi)^2,\]
and  apply Lemma \ref{comporthdec} to obtain
Takagi's factorization of $T(\xi)$ in the form
\begin{equation}
\label{TakagiT}
U^T\,T(\xi)\,U={\rm diag}(0,\ldots,0,\rho_1,\ldots,\rho_r),
\end{equation}
for some unitary matrix $U$ whose first column is $u$.
Next, define a matrix-valued function $T_1$ by
\begin{eqnarray*}
T_1(s)&:=&T(s)B_{\xi,u}(s)^{-1}\\
&=& T(\xi)B_{\xi,u}(s)^{-1}+(s-\xi)T^\prime(\xi)B_{\xi,u}(s)^{-1}+O(s-\xi)\\
&=&(U^*)^T{\rm
  diag}(0,\ldots,0,\rho_1,\ldots,\rho_r)\,B_{\xi}(s)^{-1}U^*+(s-\xi)T^\prime(\xi)B_{\xi,u}(s)^{-1}+O(s-\xi)\\
&=&T(\xi)+(s-\xi)T^\prime(\xi)B_{\xi,u}(s)^{-1}+O(s-\xi)
\end{eqnarray*}
where we have used (\ref{Bxiu}).
Clearly, $T_1(s)$ is analytic about $\xi$ and, since $B_{\xi,u}(s)$ has degree $1$, $
T_1(s)$ is  an  inner
function  of McMillan degree $N-1$. 
Now, using  (\ref{invQxiu}) and the symmetry of  $T(\xi)$ which implies $u^TT(\xi)=0$,
we  get the following interpolation condition for $T_1(s)$:
\[u^TT_1(\xi)=u^TT(\xi)+2{\rm Re} \,\xi u^TT^\prime(\xi)uu^*=0.\]
Applying what precedes to $T_1(\xi)^T$, we see that 
$R(s):=T_1(\xi)^TB_{\xi,u}(s)^{-1}$ is analytic at $\xi$ and thus an inner
function of degree $N-2$. Finally, we can write
\[R(s)= B_{\xi,u}(s)^{-T }T(s)B_{\xi,u}(s)^{-1},\]
 which achieves the proof.
\end{proof}

\begin{proposition}\label{recsteplemma}
Let $T(s)$ be a symmetric inner function of degree $N$ and suppose that
$T(s)$ has a  zero $\xi\in\Pi^+$  of {\em multiplicity
greater than} $1$. Then there exists an elementary Blaschke factor of the form 
$B(s)=B_{\xi,u}$, where  $u\in\CC^p$ is a unit vector in the kernel of $T(\xi)$, such that 
\[R(s)=B(s)^{-T}\, T(s)\, B(s)^{-1}\]
 is analytic at $\xi$. Thus $R(s)$ is inner, symmetric, and it has McMillan
degree $N-2$. Moreover, if $\cal V$ is a 2-dimensional subspace of the kernel of $T(\xi)$, we may impose
in addition that $u\in{\cal V}$.
\end{proposition}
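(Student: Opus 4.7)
The plan is to invoke Lemma \ref{IntCondLemma} after producing a unit vector $u \in \ker T(\xi)$ (lying in $\cal V$ when that subspace is specified) such that the second interpolation condition $u^T T'(\xi) u = 0$ holds. Given such $u$, Lemma \ref{IntCondLemma} immediately yields $B = B_{\xi, u}$ and the symmetric inner function $R = B^{-T} T B^{-1}$ of degree $N-2$. I split on $k := \dim \ker T(\xi)$.

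If $k \geq 2$ (which automatically covers the \emph{moreover} clause, since any prescribed $2$-dimensional $\cal V \subseteq \ker T(\xi)$ forces $k \geq 2$), the map $u \mapsto u^T T'(\xi) u$ is a complex quadratic form on $\cal V$, or on $\ker T(\xi)$ if $\cal V$ is not given. Any such form on a complex vector space of dimension at least $2$ admits a non-zero isotropic vector (diagonalize the form and then mix two diagonal directions to cancel the values), and renormalizing to unit Hermitian norm produces the required $u$.

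The substantive case is $k = 1$, where $u$ is forced up to a phase and one must show $u^T T'(\xi) u = 0$ automatically. The key tool is a local symmetric diagonalization
\[
T(s) = W(s) D(s) W(s)^T
\]
on a neighborhood of $\xi$, with $W(\xi)$ invertible and $D(s) = \diag(d_1(s), \ldots, d_p(s))$ diagonal. I would construct this by starting from Lemma \ref{comporthdec} (Takagi) applied to $T(\xi)$, performing the induced symmetric congruence to make $T(\xi)$ diagonal, and then iterating a symmetric Schur-complement reduction---this being, in effect, the diagonalization of symmetric bilinear forms over the local ring of analytic germs at $\xi$. Given this factorization, the argument runs as follows. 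Since $k = 1$, exactly one diagonal entry, say $d_1$, vanishes at $\xi$, and the unique kernel direction is $u = \alpha\, W(\xi)^{-T} e_1$ for a normalizing scalar $\alpha$. Setting $g(s) := W(s)^T u$ gives $g(\xi) = \alpha\, e_1$, so $g_1(\xi) \neq 0$ while $g_j(\xi) = 0$ for $j \geq 2$. Then
\[
u^T T(s) u = g(s)^T D(s) g(s) = \sum_{j=1}^{p} g_j(s)^2\, d_j(s),
\]
and each summand is $O((s-\xi)^2)$: for $j \geq 2$ because $g_j(s) = O(s-\xi)$ while $d_j$ remains bounded, and for $j=1$ because taking determinants in $T = W D W^T$ shows the order of vanishing of $\det T$ at $\xi$ equals that of $d_1$, so the multiplicity-$\geq 2$ hypothesis forces $d_1(s) = O((s-\xi)^2)$. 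Summing, $u^T T(s) u = O((s-\xi)^2)$, whence $u^T T'(\xi) u = 0$.

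The main obstacle is the $k=1$ case; it uses the symmetry of $T$ essentially, through the availability of a symmetric (rather than general) local diagonalization. Once $u$ is in hand, a single application of Lemma \ref{IntCondLemma} finishes the proof.
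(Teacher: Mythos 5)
Your proof is correct, and it rests on the same pivot as the paper's: everything is funneled into producing a unit vector $u\in\ker T(\xi)$ (inside $\mathcal{V}$ when prescribed) satisfying $u^TT'(\xi)u=0$, after which Lemma \ref{IntCondLemma} does all the work. The differences lie in how that vector is produced, and in both branches your devices differ from the paper's. When $\dim\ker T(\xi)\geq 2$, the paper splits into two sub-cases according to whether some nonzero $v\in\mathcal{V}$ gives $T(s)v=O(s-\xi)^2$ (handled through the local Smith form) or not (in which case it shows the $2\times 2$ block $R_1(\xi)$ of $Q'(\xi)$ is invertible and exhibits an explicit isotropic vector of its Takagi form); your single observation that every complex quadratic form on a space of dimension at least $2$ admits a nonzero isotropic vector subsumes both sub-cases and is cleaner. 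When $\dim\ker T(\xi)=1$, the paper stays with the non-symmetric local Smith form $T=E\,\mathrm{diag}[\cdot]\,F$, reads off $T(s)u_0=O(s-\xi)^2$ for $u_0=F^{-1}(\xi)e_p$, hence $T'(\xi)u_0=-T(\xi)u_1$, and then invokes symmetry exactly once through $u_0^TT(\xi)=0$ to kill the quadratic term; you instead build a local \emph{symmetric} factorization $T=WDW^T$ over the germs at $\xi$ and compare orders of vanishing in $\sum_j g_j^2 d_j$. That factorization does exist, and your Takagi-plus-Schur-complement sketch is adequate here because after the unitary congruence the complementary $(p-1)\times(p-1)$ block is invertible at $\xi$ (indeed only the block splitting $d_1\oplus Q_{22}$ is needed, not a full diagonalization), and the identification of $\mathrm{ord}_\xi d_1$ with $\mathrm{ord}_\xi\det T$, which is the multiplicity of the zero since $T$ is inner, is exactly right. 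So the argument is sound; it simply trades the paper's one-line symmetry trick for a somewhat heavier, but more structurally transparent, local normal form.
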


\begin{proof}
Write the local Smith form of $T$ at $\xi$, namely (\ref{localSmithform}) 
where $S$ is replaced by $T$ and $k$ is equal to $p$.
Assume first that the kernel of $T(\xi)$ has dimension 1 over $\CC$.
Since $T$ is analytic at $\xi$ and 
$\xi$ is a zero of multiplicity at least $2$, the partial multiplicities must satisfy
$\nu_j=0$ for $1\leq j<p$ and $\nu_p\geq2$. Therefore,
if $e_p$ is the last element of the canonical basis of $\CC^p$, the vector
$u_0=F^{-1}(\xi)e_p$ spans the kernel of $T(\xi)$ and the
$\CC^p$-valued function $\phi(s)=F^{-1}(s)e_p$ is such that $T(s)\,\phi(s)$ has a zero of 
order at least $2$ at $\xi$:
\begin{equation}
\label{Tphi}
T(s)\,\phi(s)=O(s-\xi)^2.
\end{equation}
Let us write the Taylor series of
$T(s)$ and $\phi(s)$ about $\xi$:
\begin{eqnarray*}
T(s)&=&T(\xi)+(s-\xi)T^\prime(\xi)+O(s-\xi)^2,\\
\phi(s)&=&u_0+ u_1(s-\xi)+O(s-\xi)^2,
\end{eqnarray*}
for some $u_1\in\CC^p$. Then
\[T(s)\phi(s)=T(\xi)u_0+(s-\xi)(T^\prime(\xi)u_0+T(\xi)u_1)+O(s-\xi)^2,\]
and we must have
\begin{eqnarray*}
T(\xi)u_0&=&0\\
T^\prime(\xi)u_0+T(\xi)u_1&=&0.
\end{eqnarray*}
By symmetry the first equation implies $u_0^TT(\xi)=0$ and  then the second one
yields 
$u_0^TT'(\xi)u_0=0$. Since $u_0\neq0$, we obtain the desired result
from Lemma \ref{IntCondLemma} with $u=u_0/\|u_0\|$.

Assume next that the kernel of $T(\xi)$ has dimension at least 2 over $\CC$
and let $\cal V$ be a 2-dimensional subspace. Two cases can occur:

(i) There exists a non-zero $v\in{\cal V}$ such that $T(s)v$ vanishes at $\xi$ with order 
at least 2.
On grouping the partial multiplicities in such a way that $\sigma_j=0$ for $1\leq j\leq j_0$,
$\sigma_j=1$ for $j_0<j\leq j_1$, and $\sigma_j\geq2$ for $j_1<j\leq p$, we deduce that
in the decomposition
\[F(s)v=\sum_{j=1}^{j_0}\phi_j(s)e_j+\sum_{j=j_0+1}^{j_1}\phi_j(s)e_j+
\sum_{j=j_1+1}^{p}\phi_j(s)e_j\]
the functions $\phi_j$ vanish at $\xi$ with order at least 2 for $1\leq j\leq j_1$ and 
at least 1 for $j_0<j\leq j_1$. In particular $F^\prime(\xi)v$ lies 
the span of the $e_j$ for $j_0<j\leq p$ which is the kernel of $T(\xi)F^{-1}(\xi)$.
Therefore if we set $\phi(s):=F^{-1}(s)F(\xi)v$, we get an analytic function
about $\xi$ such that $T(\xi)\phi(\xi)v=0$ and 
\[(T\phi)^\prime(\xi)= T^\prime(\xi)v-T(\xi)F^{-1}(\xi)F^\prime(\xi)v=0+0=0\]
so that (\ref{Tphi}) holds. The result now follows as in the previous part of the proof with $u_0=v$.

(ii) Each non-zero $v\in{\cal V}$ is such that $T(s)v$ vanishes at $\xi$ with order 1.
By Takagi's factorization
\[U^T T(\xi)U={\rm diag}(0,\ldots 0,\rho_1,\rho_2,\ldots \rho_r),\] 
for some unitary matrix $U$, where $0\leq r\leq p-2$ and $0<\rho_1\leq \rho_2\leq\ldots \rho_r$.
We may assume that the first two columns of $U$ form a basis of ${\cal V}$. Define a 
matrix-valued analytic function about $\xi$ by
\[Q(s):=U^T T(s)U={\rm diag}(0,\ldots ,0,\rho_1,\rho_2,\ldots, \rho_r)+(s-\xi)Q'(\xi)+O(s-\xi)^2.\]
The matrix $Q'(\xi)$ is symmetric, and its $2\times 2$ left
upper-block $R_1(\xi)$  is invertible. Otherwise indeed, there would exist a non-zero 
$y=[y_1,y_2,0,\ldots,0]^T\in\CC^p$ such that $T(s)Uy$ vanishes at $\xi$ with order 2, but
this contradicts our standing assumption since $Uy\in{\cal V}$ by the choice of $U$.

We  now exhibit  a non-zero vector $w$ which satisfies the interpolation
conditions
\begin{eqnarray}
\label{intcond0}
Q(\xi)w&=&0\\
\label{intcond2}
w^TQ'(\xi)w&=&0.
\end{eqnarray}
To meet (\ref{intcond0}), we put $w=(x,0,\ldots,0)^T$ with $x\in\CC^2$ so that 
$w^TQ'(\xi)w=x^TR_1(\xi)x$. By Lemma \ref{comporthdec} we may write 
$U_1^TR_{1}(\xi)U_1=\Lambda$, with $U_1$ is a $2\times2$ unitary matrix, where
$\Lambda=\diag[\lambda_1^2, \lambda_2^2]$ is strictly positive.
Define now $v^T:=  \frac{1}{\sqrt{\lambda_1^2+\lambda_2^2}} \left[ \begin{array}{cc}
    \lambda_2 & i\lambda_1 
  \end{array} \right]$. Obviously
\[v^T\Lambda v=\frac{1}{\lambda_1^2+\lambda_2^2} [\lambda_2, -i \lambda_1]
\left[ \begin{array}{cc} 
    \lambda_1^2 & 0 \\ 
  0  & \lambda_2^2 \\ 
  \end{array} \right]  \left[ \begin{array}{c}
    \lambda_2 \\ 
    -i \lambda_1 \\ 
  \end{array} \right]=0,
\]
hence setting $x=U_1v$ we have that $w^TQ'(\xi)w=x^TR_1(\xi)x=v^T\Lambda v=0$,
as desired. Finally, if we let  $u=U[w^T,0]^T$, we get $u\in{\cal V}$ and putting
$B(s):=I_p+(b_{\xi}(s)-1)uu^*$ 
we see from Lemma \ref{IntCondLemma} that $B^*(s)^TT(s)B^*(s)$ is analytic about
$\xi$, which achieves the proof.
\end{proof}

\begin{theorem}\label{mindegrep}
Let  $S$  be  a $p\times p$ symmetric Schur function of McMillan degree $n$
which is strictly contractive at infinity.
Let further $\nocH$ be the state characteristic matrix defined by (\ref{def:A}) for some
minimal symmetric realisation $(A,B,C,D)$ of $S$. 
Assume that $\chi_\nocH$ has exactly $\kappa$ distinct 
  roots in $\Pi^+$ with odd algebraic multiplicity.
 Then, $S$ has a $2p\times 2p$ symmetric
  inner extension of McMillan degree $n+\kappa$, and this extension has
  minimal degree among all such extensions of $S$.
\end{theorem}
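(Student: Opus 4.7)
The statement of Theorem \ref{mindegrep} combines an existence claim (a symmetric inner extension of $S$ of McMillan degree $n+\kappa$) with a minimality claim (no smaller degree is possible). I plan to establish these as separate arguments and then combine.

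For the lower bound, let $\Sigma$ be an arbitrary symmetric inner extension of $S$. Proposition \ref{AllLosslessExtensions} writes
\[
\Sigma \;=\; \begin{block}{cc} L & 0 \\ 0 & I_p \end{block} \cS_P \begin{block}{cc} R & 0 \\ 0 & I_p \end{block}
\]
with $L,R$ inner and $P$ a Hermitian solution of (\ref{RE}), and since McMillan degree is additive under products of inner functions, $\deg \Sigma = n + \deg L + \deg R$. Transposing this factorization and using the identity $\cS_P^{\transpose}=\cS_{P^{-\transpose}}$ established in the proof of Proposition \ref{extension}, the symmetry $\Sigma = \Sigma^{\transpose}$ read on the $(2,1)$-block yields $R = Q_P L^{\transpose}$, equivalently $Q_P = R L^{-\transpose}$, where $Q_P=(S_{21}^P)^{-1}(S_{12}^P)^{\transpose}$ is the function of Proposition \ref{extension}. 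Sub-multiplicativity of McMillan degree then gives $\deg Q_P \leq \deg R + \deg L^{-\transpose} = \deg R + \deg L$, and combined with $\deg Q_P \geq \kappa$ from the final assertion of Proposition \ref{extension}, one concludes $\deg \Sigma \geq n + \kappa$.

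For the upper bound, I would begin with the symmetric inner extension $\Sigma_{\check P}$ of degree $2n - n_0$ provided by Proposition \ref{maxdegsymext} and perform a sequence of $(n - n_0 - \kappa)/2$ degree-reducing steps, each dropping the degree by $2$ via Proposition \ref{recsteplemma}. At each step the unit vector $u \in \CC^{2p}$ of Proposition \ref{recsteplemma} is chosen in the block-separated form $u=(u_1,0)^{\transpose}$, so that $B_{\xi,u} = \mathrm{diag}(B_{\xi,u_1}, I_p)$ acts trivially on the lower $p$ coordinates; a direct block computation then shows that $B_{\xi,u}^{-\transpose}\Sigma B_{\xi,u}^{-1}$ is again symmetric inner with lower-right block $S$ preserved, hence a symmetric inner extension of $S$ of McMillan degree $2$ less. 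After the prescribed number of reductions the degree reaches $n + \kappa$, which the lower bound identifies as minimal.

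The main obstacle is to check that at every stage with current degree strictly greater than $n + \kappa$, the hypothesis of Proposition \ref{recsteplemma} can be met with $u$ in the upper $\CC^p$-subspace --- that is, that the kernel of $\Sigma(\xi)$ at some $\Pi^{+}$-zero of $\Sigma$ intersects $\CC^p \times \{0\}$ in at least $2$ dimensions. I plan to obtain this by tracking how the zero structure of the current $\Sigma$ reflects the multiplicity pattern of $\chi_{\nocH}$ from (\ref{chiAfact}): the $\kappa$ simple $\Pi^{+}$-roots of $\chi_\nocH$ appear as rigid simple zeros that no symmetric reduction can remove, while the remaining $n-n_0-\kappa$ units of multiplicity produce zeros of $\Sigma_{\check P}$ carrying a Takagi-type pairing on their kernel which, because of the block structure $\Sigma_{\check P} = \cS_{\check P}\,\mathrm{diag}(\check Q, I_p)$ at the start and the preservation of the lower-right block $S$ throughout the induction, lies in the upper $\CC^p$-subspace. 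Each such pair fuels one application of Proposition \ref{recsteplemma} of the required type, so the reductions may be iterated until the $\kappa$ residual rigid zeros are all that remains of the excess.
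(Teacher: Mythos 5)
Your proposal follows the same route as the paper's proof on both halves. The lower bound is the paper's argument essentially verbatim: factor $\Sigma$ through Proposition \ref{AllLosslessExtensions}, read the symmetry of $\Sigma$ on the off-diagonal blocks to get $S_{21}^{-1}S_{12}^{\transpose}=RL^{-\transpose}$, and combine $\deg (RL^{-\transpose})\leq \deg R+\deg L$ with the bound $\deg (S_{21}^{-1}S_{12}^{\transpose})\geq\kappa$ from Proposition \ref{extension}; this half is complete. The upper bound is also the paper's strategy: start from $\Sigma_{\check P}$ of degree $2n-n_0$ (Proposition \ref{maxdegsymext}) and apply Proposition \ref{recsteplemma} $\ell$ times with block-diagonal Blaschke factors, where $n-n_0=\kappa+2\ell$.

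However, the step you explicitly defer --- that at every reduction the unit vector $u$ of Proposition \ref{recsteplemma} can be taken of the form $[\widetilde u^{\transpose},0]^{\transpose}$ --- is precisely where your text stops being a proof, and the ``Takagi-type pairing'' heuristic does not substitute for an argument. The paper closes this gap concretely: the factorization $\Sigma_{\check P}=\cS_{\check P}\,{\rm diag}(\check Q,I_p)$ gives the containment ${\rm Ker}\,{\rm diag}(\check Q(\xi),I_p)\subset{\rm Ker}\,\Sigma_{\check P}(\xi)$, so the kernel vectors supplied by $\check Q$ automatically lie in $\CC^p\times\{0\}$; and by (\ref{simcalA}), (\ref{dynZ}) and Proposition \ref{maxdegsymext}, the zeros of $\check Q$ in $\Pi^+$, counted with multiplicity, are exactly the $n-n_0$ eigenvalues of $\nocH$ there. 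Hence, so long as more than $\kappa$ units of zero multiplicity remain in $\Pi^+$, some zero has multiplicity at least $2$ and Proposition \ref{recsteplemma} applies with $u$ (or the two-dimensional subspace ${\cal V}$) chosen inside $\CC^p\times\{0\}$, which keeps the Blaschke factor of the form ${\rm diag}(B_0,I_p)$ and preserves the lower-right block $S$. You would still need to note why this mechanism can be repeated on $T_1=B^{-\transpose}\Sigma_{\check P}B^{-1}$ and its successors; the paper is terse on this as well, but your sketch as written supplies neither the containment nor the identification of the $\Pi^+$-zeros of $\check Q$ with the $\Pi^+$-spectrum of $\nocH$, and both are needed to make the induction run.
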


\begin{proof}
 We apply Proposition
\ref{recsteplemma} to the symmetric inner extension of Proposition 
\ref{maxdegsymext}:
\[\Sigma_{\check P}=\cS_{\check P} 
 \left[ \begin{array}{cc}{\check Q} &0\\ 
   0 & I_p \\ 
  \end{array}\right]
\]
in which $\check P$ is the minimal solution to (\ref{RE}).
Recall  that $\check P$ is the only solution to the Riccati equation such that 
$\sigma(\Galpha+\check P\Ggamma)\subset {\bar \Pi}^-$ (see section
\ref{relspec}), or equivalently  
$\sigma(\Galpha^*+\Ggamma \check P)\subset {\bar \Pi}^+$. By (\ref{simcalA}), (\ref{dynZ}), and Proposition 
\ref{maxdegsymext}, the eigenvalues 
of $H$ in $\Pi^-$, which are (counting multiplicity)
$n-n_0$ in number by (\ref{chiAfact}), are precisely the poles of $\check Q$.
Likewise the eigenvalues of $H$ in $\Pi^+$ are the zeros of
$\check Q$, that are reflected from its poles across the imaginary axis with corresponding
multiplicities. 

Let $\xi_1\in\Pi^+$ be one of these  with multiplicity strictly greater than one.  
By proposition \ref{recsteplemma}, there exists an inner function  $B=B_{\xi_1,u}$  such that
$T_1:=B^{-T}\Sigma_{\check P}
B^{-1}$ is inner of degree $2n-n_0-2$. 
Clearly $T_{1}$ is symmetric and moreover, since 
\[{\rm Ker}  \left[ \begin{array}{cc}
    {\check Q}(\xi_1) & ¥ \\ 
    ¥ & I_{p} \\ 
  \end{array} \right]\subset {\rm Ker} \Sigma_{\check P}(\xi_1),\]
 it follows from the proposition 
that $u$ may be chosen of the form $[\widetilde{u}^T,0^T]^T$ (each block being of size $p$) 
in which case $B$ is of the form 
\[\left[ \begin{array}{cc}B_0 &0\\ 
   0 & I_p \\ 
  \end{array}\right],\]
so that $T_1$ is an extension of $S$.
The matrix $\nocH$ has  $n-n_0=\kappa+2\ell$ eigenvalues in $\Pi^+$, counting multiplicities.
Thus, we can perform  $\ell$ iterations to obtain an extension of degree 
\[2n-n_0-2\ell=n+\kappa.\]

We now prove that this extension has minimal McMillan degree.
Let $\Sigma$ be any symmetric extension of $S$. By Proposition
\ref{AllLosslessExtensions}, it  can be written in the form
\[ \Sigma = \begin{block}{cc} L & 0 \\
 0 & I_p \end{block}
\cS_P
 \begin{block}{cc} R & 0 \\
 0 & I_p \end{block}~~~~{\rm with}~~~~\cS_P=\begin{block}{cc} S_{11} & S_{12}\\
S_{21} & S_{22} \end{block},
\]
where $L$, $R$, are inner, 
and $\cS_P$ is an inner extension of $S$ of the same degree.
The extension $\Sigma$ being symmetric, we must have
\[
(L S_{12})^\transpose=S_{21}R \Leftrightarrow S_{21}^{-1}S_{12}^\transpose=R
\bar L.\]
By Proposition \ref{extension}, the degree of the 
unitary matrix $S_{21}^{-1}S_{12}^\transpose$ cannot be less than
$\kappa$.  
This yields
\[\kappa\leq \deg{S_{21}^{-1}S_{12}^\transpose=\deg R
\bar L}\leq \deg R + \deg L,\]
so that
\[n+\kappa\leq n+\deg R + \deg L=\deg \Sigma.\]
\end{proof}

\begin{corollary}\label{prop:main}
Let $S$ be a symmetric Schur function  of size $p\times p$
which is strictly contractive at infinity. Then, the following propositions 
are equivalent
\begin{itemize}
\item[(i)] $S$ has a symmetric inner extension of size $(2p)\times (2p)$
of the same McMillan degree,  
\item[(ii)] there is a Hermitian solution $P$ to the algebraic Riccati
  equation (\ref{RE}) that satisfies $PP^\transpose=I_p$, 
\item[(iii)]  the characteristic polynomial of the matrix $\nocH$ given by
  (\ref{def:Galpha})-(\ref{def:A}) can be written 
\begin{equation}  
\chi_\nocH(s) = \pi(s)^2
\end{equation}
for some polynomial $\pi$.
\end{itemize}
Moreover, all extensions (i) are parameterized
by (\ref{Wext}), where $U_1=U_2^\transpose$ and $P$ satisfies (ii).
\end{corollary}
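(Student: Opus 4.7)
The plan is to establish the three equivalences in the order $(\rm i)\Leftrightarrow(\rm iii)$, $(\rm i)\Leftrightarrow(\rm ii)$, and then to read off the parametrization from the work done in the second equivalence.

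The equivalence $(\rm i)\Leftrightarrow(\rm iii)$ follows directly from Theorem \ref{mindegrep}: a symmetric inner extension of the same McMillan degree $n$ exists iff $\kappa=0$. To translate $\kappa=0$ into the form $\chi_\nocH=\pi^2$, I invoke the Hamiltonian structure of $\nocH$ (Lemma \ref{Adyn} and the remark following it): the eigenvalues of $\nocH$ are symmetric with respect to $i\RR$ counting multiplicities, and the partial multiplicities of any purely imaginary eigenvalue are even (as recalled before (\ref{chiAfact})), hence so is its algebraic multiplicity. Thus in the factorization (\ref{chiAfact}), $\chi_\kappa^\pm$ collect exactly the distinct roots in $\Pi^+$ (respectively $\Pi^-$) of odd multiplicity; vanishing of $\kappa$ is therefore equivalent to $\chi_\nocH=\pi^2$.

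For $(\rm i)\Leftrightarrow(\rm ii)$, I use Theorem \ref{thm:GR}: every inner extension of degree $n$ is of the form $\cS=\mathrm{diag}[U_2,I_p]\,\cS_P\,\mathrm{diag}[U_1,I_p]$ for some Hermitian $P$ solving (\ref{RE}) and unitary $U_1$, $U_2$. Writing $\cS_P$ in block form, the symmetry $\cS=\cS^T$ forces in particular $S_{21}U_1=(U_2 S_{12})^T$; solving for $S_{12}^T$ and using unitarity of $U_2$ gives $S_{21}^{-1}S_{12}^T=U_1\overline{U_2}$, a constant unitary matrix. By Proposition \ref{extension} this quantity has McMillan degree equal to $\mathrm{rank}(P^{-T}-P)$, which must therefore vanish: $PP^T=I_p$. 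Conversely, assuming $PP^T=I_p$, the size of the block $\Gamma_0$ in (\ref{V}) is zero, so the realization (\ref{Qmin}) collapses to $Q=I_p$, i.e.\ $S_{12}=S_{21}^T$. A short check (already built into Proposition \ref{extension}, via $\cS_{P^{-T}}=\cS_P^T$) then shows that $\cS_P$ itself is symmetric, producing the desired extension.

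For the parametrization, start from any $P$ satisfying $(\rm ii)$: by the previous step $\cS_P$ is symmetric, and a direct block computation shows that $\mathrm{diag}[U_2,I_p]\,\cS_P\,\mathrm{diag}[U_2^T,I_p]$ is symmetric for every unitary $U_2$. Conversely, if $\cS$ is a symmetric inner extension of degree $n$, the previous paragraph gives $P$ with $PP^T=I_p$ together with $S_{12}=S_{21}^T$, and the identity $U_1\overline{U_2}=S_{21}^{-1}S_{12}^T=I_p$ forces $U_1=U_2^T$. The main obstacle in the argument is not conceptual but bookkeeping: one must carefully track how the \emph{extension-level} symmetry condition $S_{21}U_1=(U_2 S_{12})^T$ is transferred, through Proposition \ref{extension}, into the \emph{Riccati-level} constraint $P^{-T}=P$, and then unravel it back into the normalization $U_1=U_2^T$ of the left/right unitary factors.
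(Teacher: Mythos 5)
Your proof is correct and is essentially the argument the paper intends: the corollary is stated without an explicit proof precisely because it assembles Theorem \ref{mindegrep} (giving $(\mathrm{i})\Leftrightarrow\kappa=0\Leftrightarrow(\mathrm{iii})$ via the Hamiltonian spectral symmetry and the evenness of imaginary eigenvalue multiplicities), Theorem \ref{thm:GR}, and Proposition \ref{extension} (giving $\deg S_{21}^{-1}S_{12}^T=\mathrm{rank}(P^{-T}-P)$, hence $(\mathrm{i})\Leftrightarrow(\mathrm{ii})$ and the normalization $U_1=U_2^T$) in exactly the way you do. Your bookkeeping of the symmetry condition $S_{21}U_1=(U_2S_{12})^T$ down to $Q=I_p$ and back up to $U_1=U_2^T$ is accurate.
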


\section{Completion of a real Schur function}\label{realcase}

We now assume that the Schur function $S(s)$ is real that is to say 
\[S(s)=\overline{S(\bar s)}.\]
It thus admit a realization (\ref{real:S}) in which $A$, $B$, $C$ and $D$ are
real matrices. We have that

\begin{theorem}
Let $P$ be a Hermitian solution to (\ref{RE}). Then, the associated
minimal unitary extension $S_P$ of $S$ is real if and only if $P$ is real.
\end{theorem}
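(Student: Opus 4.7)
The plan is to prove both directions by exploiting the fact that, in the realization (\ref{Ireal}), the matrices $A$, $B$, $C$, $D$, $D_{11}$, $D_{12}$, $D_{21}$ are automatically real (by the assumption that $S$ admits a real minimal realization $(A,B,C,D)$ and by the explicit formulas (\ref{d21})), so $\cS_P$ is real precisely when the remaining two blocks $B_1$ and $C_1$ are real. The link with $P$ is provided by (\ref{c})--(\ref{b}) in one direction and by the Lyapunov equation (\ref{lyapeq}) in the other.

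For the easy direction, if $P$ is real then the matrices $\Galpha$, $\Gbeta$, $\Ggamma$ built from $A$, $B$, $C$, $D$ via (\ref{def:Galpha})--(\ref{def:Ggamma}) are real, and the formulas (\ref{c}) and (\ref{b}) immediately yield $B_1$ and $C_1$ real. Hence (\ref{Ireal}) is a real minimal realization of $\cS_P$, so $\cS_P$ has real coefficients.

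For the converse, assume $\cS_P$ is real. Conjugating (\ref{Ireal}) entrywise produces the quadruple $(A,\,[\bar B_1\ B],\,[\bar C_1^T\ C^T]^T,\,\cD)$, which is another minimal realization of the same real function $\cS_P$. By uniqueness of minimal realizations up to similarity there exists an invertible $T$ with $TA = AT$, $TB = B$, $TB_1 = \bar B_1$, $CT = C$, and $\bar C_1 T = C_1$. Combining $TA = AT$ with $TB = B$ gives $TA^k B = A^k B$ for every $k \geq 0$, so by the reachability of $(A,B)$ we get $T = I_n$; therefore $B_1$ and $C_1$ are in fact real.

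It remains to deduce that $P$ itself is real, and here I would feed the reality of $B_1$ back into (\ref{lyapeq}). With $A$, $B$, $B_1$ all real, the right-hand side $-(B_1 B_1^T + BB^T)$ of $AP + PA^T = -(B_1 B_1^T + BB^T)$ is real; since $A$ has its spectrum in $\Pi^-$, the spectra of $A$ and $-A^T$ are disjoint and (\ref{lyapeq}) has a unique solution. Taking the entrywise conjugate of the equation shows that $\bar P$ satisfies it as well, and uniqueness gives $P = \bar P$. The only nontrivial step in this program is the reachability-based conclusion $T = I_n$; everything else reduces to direct substitution.
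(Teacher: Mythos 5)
Your proof is correct, but it follows a genuinely different route from the paper's. For the converse direction the paper never touches the state-space isomorphism theorem: it reads off from (\ref{Ireal}) that the transfer block $C(sI_n-A)^{-1}PC^{*}$ is real, expands it to get $CA^{k}PC^{*}$ real for all $k$, and concludes by observability of $(C,A)$ that $PC^{*}$ is real; it then substitutes into the Riccati equation (\ref{RE}) to see that $AP+PA^{*}$ is real, and finishes by splitting $P=P_1+iP_2$ and killing $P_2$ with a second observability argument ($CP_2=0$ and $AP_2+P_2A^{*}=0$ give $CA^{k}P_2=0$ for all $k$). You instead conjugate the whole minimal realization (\ref{Ireal}), invoke uniqueness of minimal realizations up to similarity, force the similarity $T$ to be the identity via reachability of $(A,B)$ (which is legitimate, since $\cS_P$ has state dimension $n=\deg S$ so (\ref{Ireal}) is indeed minimal), and then recover the reality of $P$ from the Lyapunov equation (\ref{lyapeq}) and the uniqueness of its solution for stable $A$. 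Both arguments are sound and of comparable length; yours has the small advantage of isolating the reality of the realization data $B_1$, $C_1$ as an intermediate statement and of replacing the Riccati manipulation by the cleaner Lyapunov-uniqueness step, while the paper's is more self-contained in that it only uses observability of $(C,A)$ and never needs to know that (\ref{lyapeq}) holds or that its solution is unique.
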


\begin{proof} Formula  (\ref{Ireal}) for $S_P$ shows that if $P$ is real, then $S_P$ is
real.  Conversely, assume that $S_P$ is real. Formula (\ref{Ireal}) shows
that 
\[C(s I_n-A)^{-1} PC^*\]
is real for real $s$. This implies that $CA^kPC^*$ is real for
$k=0,1,\ldots,$ and from the observability of $(C,A)$ it follows that $PC^*$
is real. The equation  (\ref{RE}) can be rewritten in the form
\[PC^*(I_p-DD^*)^{-1}CP+(A+BD^*(I_p-DD^*)^{-1}C)P \]
\[+P (A^*+C^*(I_p-DD^*)^{-1}DB^*)+B(I_p-D^*D)^{-1}B^*=0,\]
and we see now that $AP+PA^*$ is real. Write $P=P_1+iP_2$ where $P_1$ and
$P_2$ are real matrices.  Since $CP$ is real, we have $CP_2=0$ and 
$P_2A^*+AP_2=0$. Premultiply the later equation by $C$ to obtain 
$CAP_2=0$, and by induction $CA^kP_2=0$, $k=0,1,\ldots,$.
Now, by the observability of $(C,A)$, $P_2=0$ and $P$ is real. 
\end{proof}

A real minimal realization of a symmetric function may fail to be symmetric. 
However there exist a minimal real realization which is signature symmetric (see \cite{YT} or
\cite[th.6.1]{F83}), that is to say
\[A^T=JAJ,~~~~B^T=CJ,~~~~C^T=JB,~~~~D^T=D,\]
for some signature matrix
\[J=\begin{block}{cc}I_r&0\\
0 & -I_{n-r}\end{block}.\]
It follows that 
%$\Galpha^T=J \Galpha J$, $\Gbeta^T= J \Ggamma J$, $\Ggamma^T=J \Gbeta J$,
$\Galpha^T=J \Galpha J$, $(\Gbeta)^T= J \Ggamma J$, $(\Ggamma)^T=J \Gbeta J$,
which implies that if $P$ is a solution of (\ref{RE}), then $\widetilde P=J
P^{-T}J $ is
also a solution and we have $S_{\widetilde P}=S_P^T$.
 If $S_P$ is a real inner extension of $S$ computed as in Theorem
 \ref{thm:GR}, then an analog of Proposition \ref{extension}, in which $P^{-T}$ must be
 replaced by $\widetilde P$, allows
to construct a real symmetric unitary extension of $S$ with the same
properties. However, the situation is more involved than in the complex case.
Even if the algebraic multiplicity of the eigenvalues of  $\nocH$ are all even, 
a symmetric real extension at the
same degree may not exist. We conclude by illustrating this with an example.

\subsection{An example}
Consider the rational matrix  
\[S(s)=\begin{block}{cc} f(s) & 0\\
0 & f(s)\end{block},\]
where $f(s)$ is a strictly proper scalar Schur function of
McMillan degree $1$. Let
\[f(s)= d + c (s-a)^{-1}c,\]
be a symmetric realization of $f(s)$. 
The matrix-valued function $S(s)$ is Schur and contractive at infinity and
has the minimal realization 
\[S(s)=D+C(sI_2-A)^{-1}C^{T},~~~~{\rm with}~~~~
D= d \,I_2,~~C= c\,I_2, ~~A=a\,I_2.\]
In this example, all the 
eigenvalues of $\nocH$ defined in (\ref{def:A})
have even algebraic multiplicity.  We first construct a {\em complex}
symmetric extension at the same degree, degree $2$. 
The Riccati equation associated with $S(s)$ is (see th.3.1)
\begin{equation}
\label{Riccati}
\Ggamma P^2+(\Galpha+\Galpha^*)P+\Gbeta=0,
\end{equation}
where
\[ \Galpha=\frac{ a(1-|d|^2)+c^2\bar d}{1-|d|^2} I_2,~~~~
\Gbeta=\frac{|c|^2}{1-|d|^2} I_2,~~~~
\Ggamma=\frac{|c|^2}{1-|d|^2} I_2.\]
The equation  (\ref{Riccati}) can be rewritten as
\begin{equation}
\label{RiccatiSimp}
P^2+2 \frac{\hbox{Re}(\Galpha)(1-|d|^2)}{|c|^2}P+I_2=0.
\end{equation}
A Hermitian solution to (\ref{RiccatiSimp})  can be diagonalized in the form
$P=U\diag{(\lambda,\mu)}U^*$,
where $U$ is an unitary matrix. 
Note that the eigenvalues of $P$ must be solutions
to the scalar Riccati equation 
\begin{equation}
\label{RiccatiScalar}
p^2+2 \frac{\hbox{Re}(\Galpha)(1-|d|^2)}{|c|^2}p+1=0,
\end{equation}
and must be positive real, since $f(s)$ is Schur (positive real lemma).
 By corollary \ref{prop:main},  a complex symmetric extension is obtained from 
a solution $P$ to (\ref{RiccatiSimp}) which satisfies $PP^T=I_2$.
Such a solution is obtained taking 
\[U=\frac{1}{\sqrt{2}}\begin{block}{cc} 1 & 1\\
-i & i \end{block},\]
and for $\lambda$ and $\mu$  the two distinct solutions to
(\ref{RiccatiScalar}) which satisfy $\lambda\mu=1$.
The solution is 
\[P=\frac{1}{2}\begin{block}{cc}\lambda+\mu & i(\lambda-\mu)\\
-i(\lambda-\mu) & \lambda+\mu\end{block}.\]
It is easy to check that  $P$ is Hermitian and moreover that $PP^T=I_2$.

We now come to the {\em real case} and assume that  $a, c$ and $d$ are real. 
The Schur function $S(s)$ has a real extension if and only if the Riccati
equation (\ref{RiccatiSimp}) has a real solution $P$. Such a real solution 
 must be of the form 
\[P=O\begin{block}{cc} \lambda  & 0\\
0 &\mu
\end{block}O^T,\]
where $O$ is a real orthogonal matrix,
and $\lambda$ and $\mu$ are  (positive real) solutions
to the scalar Riccati equation 
(\ref{RiccatiScalar}). 
Note that $S(s)$ is Schur if and only if (\ref{RiccatiScalar}) has a positive real
solution, which happens if and only if
$a\leq -\frac{c^2}{1-d}$.
A symmetric real extension of $S(s)$ at the same degree is obtained from a
real solution  $P$ to (\ref{RiccatiSimp}) which in addition satisfies
$PP^T=I_2$, but this may only happen  if $P=I_2$, that is  when $a= -c^2/(1-d)$,
in which case the two solutions of (\ref{RiccatiScalar}) are equal to $1$.
For example, the function $f(s)=1/(s+\zeta)$
is Schur if and only if  $\zeta\geq 1$. It
has a real symmetric extension of degree $2$ if and only if  $\zeta=1$.

\end{document}